\newcommand{\bburl}[1]{\textcolor{blue}{\url{#1}}}
\newtheorem{thm}{Theorem}[section]
\newtheorem{lem}[thm]{Lemma}
\newtheorem{prop}[thm]{Proposition}
\theoremstyle{definition}
\theoremstyle{definition}
\theoremstyle{remark}
\newtheorem{rem}[thm]{Remark}
\numberwithin{equation}{section}
\newcommand{\B}{\ensuremath{{\bf B}}}
\newcommand{\GL}{\mathrm{GL}}
\newcommand{\Hom}{\mathrm{Hom}}
\title{The Siegel-Weil formula for unitary groups: the second term range}
\author{Hengfei Lu}
\email{\textcolor{blue}{\href{hengfei.lu@weizmann.ac.il}{hengfei.lu@weizmann.ac.il}}}
\address{Department of Mathematics, Weizmann Insitute of Science, 234 Herzl St. POB 26, Rehovot 7610001, Israel}
\date{\today}
\begin{document}

\begin{abstract}
We study the Siegel-Weil formula in the second term range ($n+1\leq m\leq n+r$) for unitary groups of hermitian forms over a skew-field $D$ with involution of the second kind.
\end{abstract}

\maketitle

\tableofcontents
\section{Introduction}
The Siegel-Weil formual is an identity between an Eisenstein series and an integral of a regularized theta function. The convergent case ($r=0$ or $m>n+r$) was studied first by Weil in \cite{weil}. The case for the classical unitary group ($d=1$) have been extensively studied by Ichino \cite{ichino2001crelle,ichino2004mathz,ichino2007sw} and Gan-Qiu-Takeda \cite{gan2014siegelweil}. When $d>1$, the first term identity in the first term range ($m\leq n$) was proved by Yamana in \cite{yamana2013siegel}. This paper will focus on the sencond term range, i.e., $n+1\leq m\leq n+r$ and $d>1$. The proof is indebted to Gan-Qiu-Takeda \cite{gan2014siegelweil}.

Following \cite{yamana2013siegel}, let $E/F$ be a quadratic extension of number fields and $D$ be a division algebra with center $E$, of dimension $d^2$ over $E$ and provided with an antiautomorphism $\ast$ of order two under which $F$ is the fixed subfield of $E$. Let $\mathbb{A}$ and $\mathbb{A}_E$ be the adele rings of $F$ and $E$ respectively. Let $\omega_{E/F}$ be the quadratic charater of $\mathbb{A}^\times/F^\times$ associated to the extension $E/F$.
 Given a local place $v$ of $F$, let $F_v$ be the $v$-completion of $F$ and set $E_v=E\otimes_F F_v,D_v=D\otimes_F F_v$. Then
\[D_v\cong\begin{cases}
M_{d}(E_v)&\mbox{ if }E_v\mbox{ is a local field},\\
D_{F_v}\oplus D_{F_v}^{op}&\mbox{ if }E_v=F_v\oplus F_v,
\end{cases} \]
where $D_F$ is a central simple algebra with center $F$, of dimension $d^2$ over $F$, $D_{F_v}=D_F\otimes_F F_v$ is a central simple algebra with center  $F_v$ and $D_{F_v}^{op}$ is its opposite algebra (see \cite[Theorem 10.2.4]{scharlau1985}).
 Let $W_{2n}$ be a right $D$-vector space of dimension $2n$ with a nondegenerate skew-Hermitian form that has a complete polarization, and $V_r$ a left $D$-vector space of dimension $m$ with a nondegenerate Hermitian form. Let $\chi_V$ be the quadratic character of $\mathbb{A}_E^\times/E^\times$ associated to $V$ such that $\chi_V|_{\mathbb{A}^\times/F^\times}=\omega_{E/F}^{dm}$.
  Let $V_0$ be a left $D$-vector space of dimension $m_0$ with $U(V_0)$ anisotropic and $V_r=V_0\oplus D^{2r}$, where $D^{2r}$ is a $D$-vector space with Hermtian form
\[\langle x,y\rangle =x J(y^\ast)^t,J=\begin{pmatrix}
0&\mathbf{1}_n\\\mathbf{1}_n&0
\end{pmatrix} \] for $x,y\in D^{2r}$, $\mathbf{1}_n$ is the identity matrix in $M_n(D)$, $x^t$ is the transpose of $x$ and $r$ is called the Witt index of $V_r$.  Let $G_{2n}$ and $H_r$ be the unitary group of $W$ and $V$ respectively. Then
\[G_{2n}(F_v)\cong\begin{cases}
U_{nd,nd}&\mbox{ if }E_v\mbox{ is  a field};\\
\GL_{2n}(D_{F_v})&\mbox{ if }E_v=F_v+F_v.
\end{cases} \]

Let $\alpha_E$ denote the standard norm of 􏳗$\mathbb{A}^\times_E$ .  We denote by $P$ the maximal parabolic subgroup
of $G_{2n}$ that stabilizes a maximal isotropic subspace of $W$􏳞. Note that $P$ has a Levi
decomposition $P = MN$ with $M \cong \GL_n(D)$.  For any unitary character $\chi$ of 􏳗$\mathbb{A}^\times_E /E^\times$
and for any $s\in\mathbb{C}$ 􏳘,we consider the representation $I(s,\chi)=Ind_{P(\mathbb{A})}^{G_{2n}(\mathbb{A})}\chi\alpha_E^s$
 induced
from the character $m 􏰀\rightarrow \chi(\nu(m))\alpha_E(\nu(m))^s$, where $\nu$ is the reduced norm viewed
as a character of the algebraic group $\GL_n(D)$ and the induction is normalized so that $I (s, \chi)$ is naturally unitarizable when $s$ is pure imaginary.
When $E=F+F$, we consider $$I(s,\mathbf{1})=Ind_{P(\mathbb{A})}^{GL_{2n}(D_F(\mathbb{A}))}\alpha_E^s\boxtimes\alpha_E^{-s},$$ where $P=MN$ and $M\cong \GL_n(D_F)\times\GL_n(D_F)$.
 For any holomorphic section $f^{(s)}$ of $I(s,\chi)$, i.e.
\[f^{(s)}(mng)=\chi(\nu(m))\alpha_E(\nu(m))^{s+dn/2}f^{(s)}(g) \]
for $m\in\GL_n(D(\mathbb{A}))$, $n\in N(\mathbb{A})$ and $g\in G_{2n}(\mathbb{A})$,
 the Siegel Eisenstein series
\[E(g;f^{(s)})=\sum_{\gamma\in P(F)\backslash G(F) }f^{(s)}(\gamma g)  \]
is absolutely convergent for $Re(s)>\frac{dn}{2}$
and has a meromorphic continuation to the whole $s$-plane.
\begin{lem}\cite[Theorem 1]{yamana2013siegel}\label{siegeleisenstein}
	If $n+1\leq m\leq n+r$ and $r>0$, then the Siegel Eisenstein series $E(g;f^{(s)})$ has a simple pole at $s=s_0=(m-n)d/2$ where $\chi=\chi_V$.
\end{lem}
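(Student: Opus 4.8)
The plan is to detect the poles of $E(g;f^{(s)})$ through its constant term along the Siegel parabolic $P=MN$. By Langlands' theory of Eisenstein series the constant term equals
$$E_P(g;f^{(s)})=f^{(s)}(g)+M(s,\chi)f^{(s)}(g),$$
where $M(s,\chi)\colon I(s,\chi)\to I(-s,\chi^{-1})$ is the standard intertwining operator attached to the long Weyl element of this maximal parabolic. Since $f^{(s)}$ is a holomorphic section and hence entire in $s$, any pole of $E(g;f^{(s)})$ with $\mathrm{Re}(s)\ge 0$ must occur in the term $M(s,\chi)f^{(s)}$. It therefore suffices to show that $M(s,\chi)$ has a pole of order exactly one at $s_0=(m-n)d/2$ when $\chi=\chi_V$, and that this pole survives on some section.

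My first step is to make the Levi and the inducing data explicit. The excerpt records $G_{2n}(F_v)\cong U_{nd,nd}$ wherever $E_v$ is a field, and the reduced norm $\nu$ identifies the inducing character of $M\cong\GL_n(D)$ with the character $\chi\alpha_E^s$ of $\mathbb{A}_E^\times$; moreover the shift $dn/2$ in the section is precisely $\rho_P$ for the rank parameter $N:=nd$. Consequently $E(g;f^{(s)})$ is, place by place, the standard Siegel Eisenstein series on $U_{N,N}$ induced from $\chi\alpha_E^s$, and I may invoke the Gindikin--Karpelevich factorization (as in \cite{gan2014siegelweil,yamana2013siegel})
$$M(s,\chi)=\frac{\mathbf{d}_N(s,\chi)}{\mathbf{d}_N(-s,\chi^{-1})}\prod_v M_v^{\natural}(s,\chi),$$
where the normalized local operators $M_v^{\natural}(s,\chi)$ are holomorphic and nonvanishing near $s_0$ for a suitable choice of good section, and $\mathbf{d}_N(s,\chi)=\prod_{k}L\big(2s-c_k,\ \chi|_{\mathbb{A}^\times}\,\omega_{E/F}^{e_k}\big)$ is a product of completed Hecke $L$-functions of $\mathbb{A}_E^\times/E^\times$ twisted by powers of $\omega_{E/F}$.

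The core of the argument is the bookkeeping of these $L$-factors at $s=s_0$ under the hypothesis $\chi=\chi_V$. Substituting $\chi_V|_{\mathbb{A}^\times}=\omega_{E/F}^{dm}$ and $2s_0=(m-n)d$, the $k$-th factor becomes $L(2s_0-c_k,\ \omega_{E/F}^{dm+e_k})$, and a Hecke $L$-function of this form can be singular only when its twisting character is trivial and its argument equals $1$. I would check that the constraint $n+1\le m\le n+r$ positions the argument so that exactly one factor degenerates to a $\zeta$-type $L$-function with a simple pole at the edge of its critical strip, while the adjacent factors stay holomorphic and nonzero; the inequality $m\ge n+1$ is what moves $s_0$ off the central value $s_0=0$ (the boundary case $m=n$), where a factor of the numerator and its reflection in the denominator would collide and raise the order, and $m\le n+r$ keeps $s_0$ within the range controlled by this single factor. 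Together with the holomorphy and nonvanishing of $\mathbf{d}_N(-s,\chi^{-1})^{-1}$ and of the $M_v^{\natural}$ at $s_0$, this gives an upper bound of one on the order of the pole.

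The step I expect to be the main obstacle is the matching lower bound: showing the simple pole is genuinely present rather than annihilated. Here two things must be controlled. Locally, I would appeal to the standard holomorphy and nonvanishing theorems for normalized intertwining operators on $U_{N,N}$ at the unramified, ramified, and archimedean places, and choose $f^{(s)}$ to be a good section with $M_v^{\natural}(s_0)f^{(s)}_v\ne 0$. Globally, I must exhibit at least one section for which $\mathrm{Res}_{s=s_0}\mathbf{d}_N(s,\chi)$ times the product of normalized local operators is nonzero, so that the residue of $E(g;f^{(s)})$ does not vanish identically; the cleanest route is to read this nonvanishing off the Siegel--Weil/first-term identity already available in the first term range $m\le n$, or to compute the residue directly on a factorizable section unramified outside a finite set. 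Pinning down the explicit constants $c_k,e_k$ in $\mathbf{d}_N$ and verifying that precisely one of them meets $s_0$ is routine once the Morita identification with $U_{N,N}$ is fixed, so I would defer those computations.
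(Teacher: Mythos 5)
Your proposal follows essentially the same route as the paper: both detect the pole through the constant term along the Siegel parabolic, factor the intertwining operator via Gindikin--Karpelevich into a ratio of Hecke $L$-functions (the paper's $a(s,\chi_V)/b(s,\chi_V)$) times a normalized operator that is entire by \cite[Lemma 1.2]{yamana2013siegel}, and locate the single $\zeta$-type factor giving a simple pole at $s_0=(m-n)d/2$. The nonvanishing of the residue that you flag as the main remaining obstacle is precisely what the paper disposes of by citing Yamana.
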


Fix a nontrivial additive character $\psi$ of $\mathbb{A}/F$ and a character $\chi_V$ of $\mathbb{A}_E^\times/E^\times$ such that $\chi_V|_{\mathbb{A}^\times}=\omega_{E/F}^{dm}$.
The group $G_{2n}(\mathbb{A}􏳗)\times H_r(\mathbb{A}􏳗)$ acts on the Schwartz space $\mathfrak{S}(V_r^n(\mathbb{A}􏳗)) $
of $V_r^n(\mathbb{A}􏳗)$ via the Weil representation $\omega_{n,r}$. Let $S(V_r^n(\mathbb{A}􏳗))$ be the subspace of $\mathfrak{S}(V_r^n(\mathbb{A}􏳗))$ consisting of functions that correspond to polynomials in the Fock model at every archimedean place of $F$. Given a function  $\phi\in S(V_r^n(\mathbb{A}))$,
set $$\Phi^{n,r}(\phi)(g)=\omega(g)\phi(0).$$ Then $\Phi^{n,r}(\phi)\in I((m-n)d/2,\chi_V)$ which is called the Siegel-Weil section associated to $V_r$. Suppose $f^{(s)}=\Phi^{n,r}(\phi)$ and the Siegel Eisentein series has an expression
\[E(s,\Phi^{n,r}(\phi))= \sum_{j\geq-1}A^{n,r}_j(\phi)(s-s_0)^j, \]
where each Laurent coefficient $A^{n,r}_j(\phi)$ is an automorphic form on $G_{2n}$ and $A_j^{n,r}$ can be viewed as a linear map
\[A_j^{n,r}:\omega_{n,r}\longrightarrow \mathcal{A}(G_{2n}) \]
where $\mathcal{A}(G_{2n})$ is the space of automorphic forms on $G_{2n}$.

The theta function associated to 􏳔 $\phi\in S(V_r^n(􏳗\mathbb{A}))$ is defined by
\[\Theta(g,h;\phi)=\sum_{x\in V_r^n(F)}(\omega(g)\phi)(h^{-1}x)  \]
for $g\in G_{2n}(\mathbb{A})$ and $h\in H_r(\mathbb{A})$. Let $\tau(H_r)$ denote the Tamagawa number of $H_r$. By Weil's criterion \cite{weil}, the integral
\[I_{n,r}(\phi)(g)=\frac{1}{\tau(H_r)}\int_{H_r(F)\backslash H_r(\mathbb{A}) }\Theta(g,h;\phi)dh  \]
is absolutely convergent for all $\phi$ either if $r=0$ or $m>r+n$.
When $m\leq r+n$ and $r>0$, the integral diverges in general.

Let $V_r=X_r\oplus V_0\oplus X_r^\ast$ such that $X_r$ is the maximal isotropic subspace  in $V$ and $U(V_0)$ is anisotropic. Let $P(X_r)=M(X_r)N(X_r)$ be the maximal parabolic subgroup of $H_r$ which stabilizes the spaces $X_r$. Then its Levi factor is 
\[M(X_r)\cong\GL_r(D)\times U(V_0). \] 
Let us fix the Iwasawa decomposition $$H_r(\mathbb{A})=P(X_r)(\mathbb{A})\cdot K_{H_r}$$ such that $K_{H_r}\cap \GL(X_r)(\mathbb{A})$ is a maximal compact subgroup of $\GL_r(X_r)(\mathbb{A})$.
Let $$I_{H_r}(s)=Ind_{P(X_r)(\mathbb{A})}^{H_r(\mathbb{A})}\alpha_E^s\boxtimes\mathbf{1}_{U(V_0)}$$ be the normalized induced representation of $H_r(\mathbb{A})$ where $\alpha_E^s$ is a character of $\GL_r(D(\mathbb{A}))$ and $\mathbf{1}_{U(V_0)}$ is the trivial representation of $U(V_0)$.

 Following \cite{kudla1994annals}, Ichino \cite{ichino2001crelle} defined a regularization of the integral $I(g,\phi)$ as follow
\begin{equation}\label{regularization}
\mathcal{E}^{n,r}(s,\phi)(g)=\frac{1}{\tau(H_r)\cdot \kappa_r\cdot P_{n,r}(s)}\int_{H_r(F)\backslash H_r(\mathbb{A}) }\Theta(g,h;z.\phi)E_{H_r}(s,\phi)dh, \end{equation}
where 
\begin{itemize}
	\item $z$ lies in the spherical Hecke algebra of $G_{2n}(F_v)\cong U_{nd,nd}$ for $v$ non-archimedean and $E_v$ is a field so that the action of $z$ commutes with the action of $G_{2n}(\mathbb{A})\times H_r(\mathbb{A})$ and $\Theta(g,-;z.\phi)$ is rapidly decreasing;
	\item $E_{H_r}(s,h)$ is the Eisenstein series given by 
	\[E_{H_r}(s,h)=\sum_{\gamma\in P(X_r)(F)\backslash H(F) }f_s^0(\gamma h)  \]
	where $f_s^0\in I_{H_r}(s)$ is the $K_{H_r}$-spherical standard section with $f_s^0(1)=1$;
	\item $P_{n,r}(s)$ is a scalar such that the Hecke operator $z\ast E_{H_r}(s,-)=P_{n,r}(s)\cdot E_{H_r}(s,-)$, which can be found in \cite[Page 208]{ichino2001crelle}.
\end{itemize}
The regularized integral (\ref{regularization}) converges absolutely at all points $s$ where $E_{H_r}(s,h)$ is holomorphic, and defines a meromorphic function of $s$ (independent of the choice of the Hecke operator $z$). (See \cite{ichino2001crelle}.) We are interested in the behavior of $\mathcal{E}^{n,r}(s,\phi)$ at
\[s=\rho_{H_r}=(m-r)d/2. \]
It turns out that in the first term range, when $m\leq n$, it has a pole of order at most $1$ 
  whereas in the second term range, it has a pole of order at most $2$ when $n+1\leq m\leq n+r$ and $r>0$.
Thus, the Laurent expansion of (\ref{regularization}) at $s=\rho_{H_r}$ has the form
\[\mathcal{E}^{n,r}(s,\phi)=\sum_{i\geq-2}B^{n,r}_i(\phi)(s-\rho_{H_r})^i  \]
where $B^{n,r}_{-2}(\phi)=0$ if $m\leq n$.
Then each Laurent coefficient $B^{n,r}_i(\phi)$ is an automophic form on $G_{2n}$, and hence we view $B^{n,r}_i$ as a linear map
\[B^{n,r}_i:\omega_{n,r}\longrightarrow \mathcal{A}(G), \]
via $\phi\mapsto B^{n,r}_i(\phi)$,
where $\mathcal{A}(G_{2n})$ is the space of automorphic forms on $G_{2n}$.

 Yamana \cite{yamana2013siegel} showed the first term identity in the first term range, i.e. $m\leq n$. In this paper, we will focus on the sencond term range, i.e. $n+1\leq m\leq n+r$.
 \begin{thm}[Siegel-Weil formula] Suppose that $n+1\leq m\leq n+r$. Then one has:
 	\begin{enumerate}[(i)]\label{secondtermsiegelweil}
 		\item (First term identity) $A^{n,r}_{-1}(\phi)=c\cdot B^{n,r}_{-2}(\phi)$ for a constant $c>0$;
 		\item (Second term identity) $$A^{n,r}_0(\phi)=B^{n,r}_{-1}(\phi)+c'\cdot B_0^{n,r'}(Ik^{n,r}(\pi_{K_{H_r}}\phi))\pmod{\mbox{Im } A_{-1}^{n,r}}.$$
 		Here $c'$ is a constant and $0<r'<r$ is such that $m_0+2r'=2n-m$. Moreover,
 		\[Ik^{n,r}:\omega_{n,r}\longrightarrow\omega_{n,r'} \]
 		is the Ikeda map which is $G_{2n}\times H_{r'}$-equivariant. If $m= n+r$, then
 		\[A_0^{n,r}(\phi)=B_{-1}^{n,r}(\phi)\pmod{\mbox{Im }A_{-1}^{n,r} }. \]
 	\end{enumerate}
 \end{thm}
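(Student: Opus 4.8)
The plan is to follow the inductive strategy of Gan--Qiu--Takeda \cite{gan2014siegelweil}, regarding both the Siegel Eisenstein series $E(s,\Phi^{n,r}(\phi))$ and the regularized theta integral $\mathcal{E}^{n,r}(s,\phi)$ as meromorphic families of automorphic forms on $G_{2n}$ and comparing their Laurent expansions. First I would establish the \emph{basic identity}: unfolding the Eisenstein series $E_{H_r}(s,h)=\sum_{\gamma\in P(X_r)(F)\backslash H_r(F)}f^0_s(\gamma h)$ inside the integral (\ref{regularization}) against the theta kernel collapses the summation over $P(X_r)(F)\backslash H_r(F)$, and via the Iwasawa decomposition $H_r(\mathbb{A})=P(X_r)(\mathbb{A})K_{H_r}$ expresses $\mathcal{E}^{n,r}(s,\phi)$ in terms of a theta integral for the anisotropic space $V_0$ and the section $f^0_s$. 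Following Ichino \cite{ichino2001crelle} and Kudla--Rallis \cite{kudla1994annals}, this meromorphically identifies $\mathcal{E}^{n,r}(s,\phi)$ with the Siegel Eisenstein series up to the normalizing scalars $\tau(H_r),\kappa_r,P_{n,r}(s)$; the shift by one between the two Laurent expansions (a simple pole of $E$ at $s_0$ by Lemma~\ref{siegeleisenstein} against a double pole of $\mathcal{E}^{n,r}$ at $\rho_{H_r}$) is absorbed by the pole of $E_{H_r}(s,-)$ itself. The crucial structural input is that the functional equation $s\mapsto -s$ of the Siegel Eisenstein series carries $s_0=(m-n)d/2$ to $-s_0=(m'-n)d/2$ with $m'=2n-m$, i.e. to the special point for the \emph{complementary} space $V_{r'}$; since $n+1\leq m$ forces $m'<n$, this complementary point lies in the first term range governed by Yamana's theorem.

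For the first term identity (i) I would isolate the leading Laurent coefficients on each side. The double pole of $\mathcal{E}^{n,r}$ at $\rho_{H_r}$, with coefficient $B^{n,r}_{-2}$, comes from the residue of $E_{H_r}(s,-)$ combined with the regularization, while the simple pole of $E(s,\Phi^{n,r}(\phi))$ at $s_0$ has coefficient $A^{n,r}_{-1}$; the basic identity makes these proportional, yielding $A^{n,r}_{-1}(\phi)=c\cdot B^{n,r}_{-2}(\phi)$. The positivity of $c$ is read off from the archimedean normalizations and the explicit residue of the spherical section $f^0_s$, as in the classical case $d=1$.

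The second term identity (ii) is the heart of the matter. Here I would compute the constant term of both families along the Siegel parabolic $P=MN$ of $G_{2n}$. On the Eisenstein side the constant term is $f^{(s)}+M(s)f^{(s)}$, controlled by the standard intertwining operator and its functional equation; the term $M(s)f^{(s)}$ reflects the Eisenstein series across to $-s_0$, i.e. to the complementary space $V_{r'}$. On the theta side a partial Fourier--Jacobi expansion of $\Theta(g,h;\phi)$ along $N(X_r)$ together with the mixed model of the Weil representation produces, beyond the leading term already matched by (i), a theta integral attached to $V_{r'}$. Identifying this descended integral via the $G_{2n}\times H_{r'}$-equivariant Ikeda map $Ik^{n,r}$ and invoking Yamana's first term identity \cite{yamana2013siegel} for $H_{r'}$ (legitimate since $m'=2n-m<n$) produces exactly the contribution $c'\cdot B^{n,r'}_0(Ik^{n,r}(\pi_{K_{H_r}}\phi))$, the spherical projection $\pi_{K_{H_r}}$ entering through the choice of the spherical standard section $f^0_s$ in (\ref{regularization}). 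Matching the $(s-s_0)^0$-coefficients modulo $\mathrm{Im}\,A^{n,r}_{-1}$ then gives (ii).

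The main obstacle is disentangling the double pole in the constant-term computation: after subtracting the leading contribution governed by (i), one must show that the surviving subleading data on the two sides differ \emph{precisely} by the Ikeda-map contribution from $V_{r'}$, with no residual second-order terms. This requires careful bookkeeping of the cross terms between the residue of $E_{H_r}(s,-)$ and the regularizing Hecke operator $z$, and an exact identification of the descended theta integral with $Ik^{n,r}(\pi_{K_{H_r}}\phi)$ using the equivariance of the Ikeda map and the functional equation of $M(s)$. Finally, the boundary case $m=n+r$ corresponds to $r'=0$, where $H_0=U(V_0)$ is anisotropic and the descended theta integral converges without a pole; the Ikeda-map term then contributes nothing modulo $\mathrm{Im}\,A^{n,r}_{-1}$, leaving the simpler identity $A^{n,r}_0(\phi)=B^{n,r}_{-1}(\phi)\pmod{\mathrm{Im}\,A^{n,r}_{-1}}$.
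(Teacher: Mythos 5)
There is a genuine gap: your proposal never uses the larger group $G_{2n+2}$, and that is the engine of the paper's entire argument. The proof here is an induction on $n$ (equivalently, a descent in the Witt tower of $W$): one first observes that for $W_{2n+2}$ the same space $V_r$ with $\dim V_r=m$ lies in (or one step closer to) the \emph{first} term range, so Yamana's identity $A_0^{n+1,r}(\tilde{\phi})=2B_{-1}^{n+1,r}(\tilde{\phi})$ (for $m=n+1$), resp.\ the inductive hypothesis (\ref{secondterminduction}) (for $m>n+1$), is available on $G_{2n+2}$. One then takes the constant term along the \emph{rank-one} parabolic $Q^{n+1}(Y_1)$ of $G_{2n+2}$, whose Levi is $\GL(Y_1)\times G_{2n}$, and extracts the $\chi_V\alpha_E^{md/2}$-isotypic piece for the $\GL(Y_1)$-action. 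The three Bruhat cells $1,\omega^+,\omega^-$ of $Q^{n+1}(Y_r)\backslash G_{2n+2}/Q^{n+1}(Y_1)$ produce, respectively, a holomorphic term, the term $\mathcal{E}^{n,r}(s,\phi)$, and --- via the functional equation and the explicit restriction formula of Proposition \ref{functionf(r)} for $f^{n+1,r}(s,\pi_{K_{H_r}}\tilde{\phi})|_{G_{2n}}$ --- the term $\mathcal{E}^{n,r-1}(s-d/2,Ik^{n,r}\pi_{K_{H_r}}\phi)$ carrying the Ikeda-map contribution. Your plan instead stays on $G_{2n}$, compares constant terms along the Siegel parabolic of $G_{2n}$ itself, and tries to extract the complementary space from a Fourier--Jacobi expansion of the theta kernel along $N(X_r)$. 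That route does not close: equality of Siegel-parabolic constant terms of two automorphic forms does not imply equality of the forms, and without the ambient $G_{2n+2}$ there is no mechanism forcing the subleading coefficients to match modulo $\mathrm{Im}\,A_{-1}^{n,r}$ --- precisely the ``bookkeeping'' you flag as the main obstacle is where the argument has no foothold.

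The gap is already visible in your treatment of the first term identity. You assert that the basic identity $\mathcal{E}^{n,r}(s,\phi)=E^{(n,r)}(s,f^{n,r}(s,\pi_{K_{H_r}}\phi))$ ``makes $A_{-1}^{n,r}$ and $B_{-2}^{n,r}$ proportional.'' It does not: that identity equates $\mathcal{E}^{n,r}$ with a \emph{non-Siegel} (Klingen-type) Eisenstein series attached to $Q(Y_r)$ at the point $\rho_{H_r}$, and says nothing directly about the residue $A_{-1}^{n,r}$ of the Siegel Eisenstein series attached to $P(Y_n)$ at $s_0$; the two leading coefficients live at different special points of Eisenstein series induced from different parabolics. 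In the paper, (i) follows from a cancellation: $\mathcal{E}^{n+1,r}(s,\tilde{\phi})$ has at most a simple pole (first term range on $G_{2n+2}$), so in its $U^{n+1}(Y_1)$-constant term the order-two pole of the $\omega^+$ term (with coefficient $B_{-2}^{n,r}(\phi)$) must cancel the order-two pole of the $\omega^-$ term (with leading coefficient a multiple of $B_{-1}^{n,r-1}(Ik^{n,r}\pi_{K_{H_r}}\phi)$), and the latter is identified with a multiple of $A_{-1}^{n,r}(\phi)$ by the first term identity \emph{in the first term range}. Without the descent from $G_{2n+2}$ and Proposition \ref{functionf(r)}, neither (i) nor (ii) is reachable by the steps you describe.
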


\begin{rem}
	When $d=1$,  it has been proven by Gan-Qiu-Takeda in \cite[Theorem 1.1]{gan2014siegelweil} and $c=1$.
\end{rem}

Now we briefly describe the contents and the organization of this paper. The basic notation will be set up in \S2. In \S3, we will introduce the Eisenstein series and their various properties.
The proof of Theorem \ref{secondtermsiegelweil} will be given in \S4. We will use the doubling method to sudy the nonvanishing of the global theta lift in the last section.

\section{Preliminaries}
From now on, we will follow the notation of Gan-Qiu-Takeda \cite{gan2014siegelweil} in this section. Let $W_{2n}$ be a $2n$-dimensional right $D$-vector space with a nondegenerate skew-Hermitian form. Assume that $Y_n$ is a maximal isotropic subspace in $W_{2n}$ of dimension $n$, so that
$W_{2n}=Y_n\oplus Y_n^\ast$. We fix an ordered basis $\{y_1,y_2,\cdots,y_n \}$ of $Y_n$ and corresponding dual basis $\{y_1^\ast,\cdots,y_n^\ast \}$ of $Y_n^\ast$, so that $Y_n=\oplus^n_{i=1} y_iD$
and $Y_n^\ast=\oplus_{i=1}^ny_i^\ast D$. For any subspace $Y_r=\oplus_{i=1}^ry_iD\subset Y_n$, let
\[Q(Y_r)=L(Y_r)\cdot U(Y_r) \]
denote the maximal parabolic subgroup fixing $Y_r$. Then its Levi factor is
\[L(Y_r)\cong \GL(Y_r)\times G_{2n-2r}. \]
If $r=n$, then $Q(Y_r)$ is a Siegel parabolic subgroup of $G_{2n}$.

The unipotent radical $U(Y_r)$ of $Q(Y_r)$ sits in a short exact sequence
\[\xymatrix{1\ar[r]&Z(Y_r)\ar[r]&N(Y_r)\ar[r]&Y_r\otimes V_{n-r}\ar[r]&1 } \]
where
\[Z(Y_r)=\{\mbox{Hermitian forms on }Y_r^\ast \}\subset \Hom(Y_r^\ast,Y_r). \]

\subsection{Measures} Let us fix the additive character $\psi$ of $\mathbb{A}/F$ and the Tamagawa measure $dx$ on $\mathbb{A}$. Locally, we fix the Haar measure $dx_v$ on $F_v$ to be self-dual with respect to $\psi_v$. For any algebraic group $G$ over $F$, we always use the Tamagawa measure on $G(\mathbb{A})$ when $G(\mathbb{A})$ is unimodular. This applies to the Levi subgroups and the unipotent radical of their parabolic subgroups. We use $\tau(G)$ to denote the Tamagawa number of $G$. 
 For any compact group $K$, we always use the Haar measure $dk$ with respect to which $K$ has volume $1$.
\subsection{Complementary spaces}
With $W_{2n}$ fixed, one may associate to $V_r$ a complementary space $V_{r'}$ such that
\[\dim V_{r'}=m_0+2r'=2n-m \]
and the quadratic character associated to $V_{r'}$ is $\chi_V$. If $m=n+r$, then $r'=0$ and the unitary group $U(V_0)$ is anisotropic. If $r>r'$,
we may write $V_r=X_{r-r'}'\oplus V_{r'}\oplus {X'}_{r-r'}^\ast$ where
\[X_{r-r'}'=\oplus_{i=r'+1}^r  Dx_{i} \]
when $X_r=\oplus_{i=1}^r Dx_{i} $ and $\{x_1,\cdots,x_r \}$ is a basis of $X_r$. We say that $V_r$ and $V_{r'}$ lie in the same Witt tower.

For any maximal parabolic subgroup $P(X_{r-r'})$ of $H_r$, with Levi subgroup $\GL(X_{r-r'})\times H_r'$, we define a constant $\kappa_{r,r'}$ by the requirement that 
\[\frac{1}{\tau(H_r)}=\kappa_{r,r'}\cdot\frac{1}{\tau(H_{r'})}\cdot dm\cdot dn\cdot dk \]
where $dm$ and $dn$ are the Tamagawa measures of $M(X_{r-r'})$ and $N(X_{r-r'})$ respectively. In particular $\kappa_r=\kappa_{r,0}$.
\subsection{Ideka's map}
Suppose that $V_r\supset V_{r'}$(not necessarily complementary spaces) and
\[ \dim V_r=m_0+2r=\dim V_{r'}+2(r-r'). \] 
Then one may write
\[ V_r=X_{r-r'}'\oplus V_{r'}\oplus {X'}^\ast_{r-r'}. \]
We can define a map
\[Ik^{n,r,r'}:S(Y_n^\ast\otimes V_r )(\mathbb{A}) \longrightarrow S(Y_n^\ast\otimes V_{r'})(\mathbb{A}) \]
given by
\[Ik^{n,r,r'}(\phi)(a)=\int_{(Y_n^\ast\otimes X'_{r-r'} )(\mathbb{A}) }\phi(x,a,0)dx,  \]
for $a\in(Y_n^\ast\otimes V_{r'})(\mathbb{A})$. Thus,
$Ik^{n, r,r'}$ is the composite
\[ 
 \begin{CD}
S(Y_n^* \otimes V_r)
= S(Y_n^* \otimes V_{r'})\otimes 
S(Y_n^* \otimes (X'_{r-r'} + {X'}^*_{r-r'}))\\
@VVId \otimes \mathcal{F}_1V \\
S(Y_n^* \otimes V_{r'})\otimes
S(W_{2n}\otimes X'_{r-r'})\\
@VVId \otimes ev_0V \\
S(Y_n^* \otimes V_{r'})
\end{CD}
\]
where 
\[  \mathcal{F}_1 : S(Y_n^* \otimes (X'_{r-r'} +
{X'}^*_{r-r'}))
\longrightarrow S(W_{2n}\otimes X'_{r-r'})\]
is the partial Fourier transform in the subspace $(Y_n^* \otimes
{X'}^*_{r-r'})(mathbb{A})$, and $ev_0$ is  evaluation at $0$. It is clear that
if $r'' < r' < r$, one has
\begin{equation} \label{E:ikeda}
Ik^{n,r', r''} \circ Ik^{n, r, r'}  = Ik^{n, r, r''}. 
\end{equation}

In the special case when  $V$ and $V'$ are complementary
spaces, we shall simply write $Ik^{n,r}$ for $Ik^{n,r,r'}$.  We will call
$Ik^{n,r}$ (more generally $Ik^{n,r,r'}$) an Ikeda map.

\subsection{Weil representation}
Let $\omega_{n,r}$ be the Weil representation of $G_{2n}(\mathbb{A})\times H_r(\mathbb{A})$. More precisely, given a Schwartz-Bruhat function $\phi\in S(Y_n^\ast\otimes V_r )(\mathbb{A})$, the $P(Y_n)(\mathbb{A})\times H_r(\mathbb{A})$-action is given by
\[
\begin{cases}
\omega_{n,r}(1, h)\phi(x)  = \phi(h^{-1} \cdot x),  &\text{  if $h \in H_r(\mathbb{A})$;} \\
\omega_{n,r}(a,1) \phi(x)  =  \chi_V(\nu(a)) \cdot \alpha_E(\nu (a))^{md/2}
\cdot \phi( a^{-1} \cdot x),  &\text{  for $a \in L(Y_n)(\mathbb{A}) =
	\GL(Y_n)(\mathbb{A})$;} \\
\omega_{n,r}(u,1) \phi(x)  = \psi(\frac{1}{2} \cdot  \langle  u(x),
x\rangle)\cdot \phi(x),  &\text{  for $u \in N(Y_n)(\mathbb{A})  \subset
	\Hom(Y_n^*, Y_n)(\mathbb{A})$.} 
\end{cases} \]

\subsection{The Fourier transform $\mathcal{F}_{n,r}$}
There is a partial Fourier transform
\[\mathcal{F}_{n,r}:S(Y_n^\ast \otimes V_r)(\mathbb{A})\longrightarrow S(W_{2n}\otimes X_r^\ast)(\mathbb{A}) \otimes S(Y_n^\ast \otimes V_0)(\mathbb{A}) \]
which is given by integration over the subspace
$(Y_n^\ast\otimes X_r )(\mathbb{A})$. We may regard $\mathcal{F}_{n,r}(\phi)$ as a function on $(W_{2n}\otimes X_r^\ast)(\mathbb{A})$ taking values in $S(Y_n^\ast \otimes V_0)(\mathbb{A})$. 

\section{Eisenstein series}
In this section, we will study the analytic behavior of the Eisenstein series at certain points.
\subsection{The Siegel Eisenstein series}
Let $G_{2n}$ be the unitary group of $W_{2n}$. Let $P(Y_n)$ be the Siegel parabolic subgroup of $G_{2n}$.
Given a normalized induced representation $I(s,\chi_V)=Ind_{P(Y_n)(\mathbb{A})}^{G_{2n}(\mathbb{A})}\chi_V\alpha_E^s$, one can construct an Eisenstein series 
\[E(g;f^{(s)})=\sum_{\gamma\in P(Y_n)(F)\backslash G_{2n}(F) }f^{(s)}(\gamma g) \]
for $f^{(s)}\in I(s,\chi_V)$ and $g\in G_{2n}(\mathbb{A})$. Sometimes we write
\[E(g;f^{(s)})= E^{(n,n)}(g;f^{(s)}) \]
when we want to emphasize the rank of the group. It admits a meromorphic continuation to the whole $s$-plane.

Let $a(s,\chi_V)=\prod_{j=1}^{dn}L(2s-j+1,\omega_{E/F}^{j+d(n+m)})$ and
\[b(s,\chi_V)=\prod_{j=1}^{dn}L(2s+j,\omega_{E/F}^{j+d(n+m)}). \]
\begin{proof}[Proof of Lemma \ref{siegeleisenstein}]
 Suppose that $f^{(s)}\in I(s,\chi_V)$. The normalized intertwining operator $M_n(s,\chi_V)$ in \cite{yamana2013siegel} is given  as follow
	\[M_n(s,\chi_V)f^{(s)}(g)=a(s,\chi_V)^{-1}\int_{N(Y_n)(\mathbb{A})} f^{(s)}(\begin{pmatrix}
	0&\mathbf{1}_n\\-\mathbf{1}_n&0
	\end{pmatrix}ng)dn . \]
	Then $M_n(s,\chi_V)$ is entire due to \cite[Lemma 1.2]{yamana2013siegel}. Moreover, at the point $s=s_0=(m-n)d/2$,
	\[ord_{s=s_0} E(g;f^{(s)})=ord_{s=s_0}\frac{a(s,\chi_V)}{b(s,\chi_V)}=-1. \]
	Therefore, $E(s,f^{(s)})$ has a simple pole at $s=s_0=(m-n)d/2$.
\end{proof}
Given a function $\phi\in S(Y_n^\ast\otimes V_r)(\mathbb{A})$,
set $$f^{(s)}(g)=\Phi^{n,r}(\phi)(g)=\omega_{n,r}(g)\phi(0)$$ and then $f^{(s)}\in I(s_0,\chi_V)$ which is called the Siegel-Weil section. Its image in $I(s_0,\chi_V)$ is isomorphic to the maximal $H_r(\mathbb{A})$-invariant quotient of $\omega_{n,r}$ by \cite[Proposition 1.4]{yamana2013siegel}.
 Let $f^{(s)}=\Phi^{n,r}(\phi)$ be the Siegel-Weil section so that 
\[E(g;\Phi^{n,r}(\phi))=A^{n,r}_{-1}(\phi)(s-s_0)^{-1}+A^{n,r}_0(\phi)+\cdots. \]
Here $A_0^{n,r}(\phi)$ denotes $Val_{s=s_0}E(g;\Phi^{n,r}(g))$.

There are local analogous notation for the intertwining operator and the Siegel-Weil section. Suppose that $V_r(F_v)$ is a Hermitian vector space over $D_v$. The maximal $H_r(F_v)$-invariant quotient of $(\omega_{n,r,v})_{H_r(F_v)}$ is isomorphic to a subrepresentation of $I_v(s_0,\chi_V)$, denoted by $R_n(V_r(F_v))$.

Let $\mathcal{C}=\{\mathfrak{V}_v \}$ be a collection of local Hermitian spaces of dimension $m$ over $D_v$ such that $\mathfrak{V}_v$ is isometric to $V_{r}(F_v)$ for almost all $v$. We form a restricted tensor product $\Pi(\mathcal{C},\chi_V)=\otimes_v' R_{n}(\mathfrak{V}_v)$, which we can regard as a subrepresentation of $I(s_0,\chi_V)$. If there is a global Hermitian $D$-vector space with $\mathfrak{V}_v$ as its completions, then we call $\mathcal{C}$ coherent. Otherwise, we call the collection $\mathcal{C}$ incoherent. By \cite[Proposition 1.4]{yamana2013siegel}, we see that the maximal semisimple quotient of $I(s_0,\chi_V)$ is given by
\[\bigoplus_\mathcal{C} \Pi(\mathcal{C},\chi_V) \]
where the sum runs over all the collections $\mathcal{C}$ (coherent or incoherent) as defined above.

Due to \cite[Proposition 3.5]{yamana2013siegel}, the image of $A^{n,r}_{-1}(\phi)$ is given by
$$\bigoplus_{\mathcal{C} }\Pi(\mathcal{C},\chi_V)  $$
where $\mathcal{C}$ runs over coherent collections.
\begin{prop}\label{secondterm}
	The leading term $A_{-1}^{n,r}(\phi)$ is $G_{2n}(\mathbb{A})$-equivariant and 
	\[A_0^{n,r}(\omega_{n,r}(g)\phi)=g\cdot A_{0}^{n,r}(\phi)\pmod{Im A_{-1}^{n,r}} \]
	for any $g\in G(\mathbb{A})$ and $\phi\in S(Y_n^\ast\otimes V_r)(\mathbb{A})$.
\end{prop}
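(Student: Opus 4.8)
The plan is to deduce both claims from the $G_{2n}(\mathbb{A})$-equivariance of the Siegel--Weil section map at the point $s_0$, transported to the Eisenstein series by choosing holomorphic extensions and bookkeeping the Laurent coefficients. The starting point is the identity, valid in $I(s_0,\chi_V)$,
\[\Phi^{n,r}(\omega_{n,r}(g)\phi)=R(g)\,\Phi^{n,r}(\phi),\]
where $R(g)$ denotes right translation by $g$. This follows at once from the definition $\Phi^{n,r}(\phi)(g')=\omega_{n,r}(g')\phi(0)$ together with the fact that $\omega_{n,r}$ is a representation, since $\omega_{n,r}(g')[\omega_{n,r}(g)\phi](0)=\omega_{n,r}(g'g)\phi(0)=\Phi^{n,r}(\phi)(g'g)$.

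First I would fix a holomorphic (standard) extension $f^{(s)}_\phi\in I(s,\chi_V)$ of $\Phi^{n,r}(\phi)$ near $s=s_0$. Since right translation preserves holomorphic sections, $R(g)f^{(s)}_\phi$ is again holomorphic, and by the displayed identity it agrees with $f^{(s)}_{\omega_{n,r}(g)\phi}$ at $s=s_0$; hence their difference equals $(s-s_0)\,h_g^{(s)}$ for some holomorphic section $h_g^{(s)}$ depending on $g$ and $\phi$. Applying the Eisenstein series and using $E(g';R(g)f)=E(g'g;f)$ gives
\[E(g';f^{(s)}_{\omega_{n,r}(g)\phi})=E(g'g;f^{(s)}_\phi)-(s-s_0)\,E(g';h_g^{(s)}).\]
Because every holomorphic section produces an Eisenstein series with at most a simple pole at $s_0$ (by the order computation in the proof of Lemma~\ref{siegeleisenstein}), I can read off Laurent coefficients. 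The $(s-s_0)^{-1}$-coefficient yields $A^{n,r}_{-1}(\omega_{n,r}(g)\phi)=R(g)A^{n,r}_{-1}(\phi)$, which is the asserted $G_{2n}(\mathbb{A})$-equivariance of the leading term, while the $(s-s_0)^0$-coefficient yields
\[A^{n,r}_0(\omega_{n,r}(g)\phi)=R(g)A^{n,r}_0(\phi)-\mathrm{Res}_{s=s_0}E(g';h_g^{(s)}).\]

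It then remains to show that the error term $\mathrm{Res}_{s=s_0}E(\,\cdot\,;h_g^{(s)})$ lies in $\mathrm{Im}\,A^{n,r}_{-1}$. The key observation is that the residue of a Siegel Eisenstein series at $s_0$ depends only on the value of the section at $s_0$ (two extensions agreeing there differ by $(s-s_0)$ times a holomorphic section, whose contribution to the residue vanishes), factors through the maximal semisimple quotient of $I(s_0,\chi_V)$, and vanishes on the incoherent summands. Consequently the image of the residue map on all holomorphic sections is exactly $\bigoplus_{\mathcal{C}}\Pi(\mathcal{C},\chi_V)$ with $\mathcal{C}$ coherent, which by \cite[Proposition 3.5]{yamana2013siegel} coincides with $\mathrm{Im}\,A^{n,r}_{-1}$. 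Since $h_g^{(s)}$ is a holomorphic section, its residue lands in this common image, and the second identity follows modulo $\mathrm{Im}\,A^{n,r}_{-1}$.

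The main obstacle is precisely this last step: the Weil representation action $\omega_{n,r}(g)$ is pinned to the point $s_0$, whereas the Laurent expansion lives in a punctured neighborhood, so the two holomorphic extensions $R(g)f^{(s)}_\phi$ and $f^{(s)}_{\omega_{n,r}(g)\phi}$ differ to first order; the resulting error is not a Siegel--Weil residue on the nose, and absorbing it requires the structural identification of the residue-map image with $\mathrm{Im}\,A^{n,r}_{-1}$ together with the incoherence vanishing. This is the only place where the finer representation theory of the degenerate principal series enters, the equivariance of $A^{n,r}_{-1}$ itself being purely formal.
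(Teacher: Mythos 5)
Your argument is correct and is essentially the proof the paper relies on: the paper disposes of Proposition~\ref{secondterm} by citing \cite[Proposition 6.4]{gan2014siegelweil} (after noting that $R_n(V_r(F_v))=I_v(s_0,\chi_V)$ for $v\in S$), and that cited proof is exactly your scheme --- $\Phi^{n,r}(\omega_{n,r}(g)\phi)=R(g)\Phi^{n,r}(\phi)$ at $s=s_0$, the two holomorphic extensions differ by $(s-s_0)$ times a holomorphic section, and the residue of the Eisenstein series of an arbitrary holomorphic section lies in $\mathrm{Im}\,A_{-1}^{n,r}$ because the residue map factors through the coherent part of the maximal semisimple quotient of $I(s_0,\chi_V)$, which by \cite[Proposition 3.5]{yamana2013siegel} is already exhausted by Siegel--Weil sections for $V_r$. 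Your identification of the last step as the only non-formal input is accurate.
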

Note that when $v\in S$, $R_n(V_r(F_v))$ is the full induced representation $I(s_0,\chi_V)$. (See \cite[Proposition 1.4]{yamana2013siegel}.) Then Proposition \ref{secondterm} follows from \cite[Proposition 6.4]{gan2014siegelweil}. 

\subsection{The non-Siegel Eisenstein series} Recall that
\begin{align*} \mathcal{E}^{n,r}(s,\phi)(g)&= \frac{1}{\tau(H_r)\cdot\kappa_r\cdot P_{n,r}(s)}\int_{H_r(F)\backslash H_r(\mathbb{A}) }\Theta(g,h;z.\phi)E_{H_r}(s,\phi)dh
\\&=\sum_{i\geq-2}B^{n,r}_i(\phi)(g)(s-\rho_{H_r})^i. \end{align*}
\begin{lem}
	There exists a function $c_r(s)$ such that
	\[E_{H_r}(s,-)=c_r(s)\cdot E_{H_r}(-s,-). \]
\end{lem}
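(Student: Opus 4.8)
The plan is to deduce this from the standard functional equation for the degenerate Eisenstein series attached to the self-associate maximal parabolic $P(X_r)$ of $H_r$, realized through the global intertwining operator attached to the nontrivial element of the relative Weyl group. Since $E_{H_r}(s,h)$ is induced from the one-dimensional datum $\alpha_E^s\boxtimes\mathbf{1}_{U(V_0)}$ on $M(X_r)\cong\GL_r(D)\times U(V_0)$, the Langlands theory of Eisenstein series for maximal parabolics applies directly, and the functional equation will come from the single Weyl reflection that inverts the inducing parameter.

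First I would introduce the intertwining operator. Let $w$ be a representative in $H_r(F)$ of the nontrivial element of the relative Weyl group $W(A_{M(X_r)})$, chosen so as to normalize $K_{H_r}$; on the Levi it acts by the outer automorphism $g\mapsto ((g^{\ast})^{t})^{-1}$ on the $\GL_r(D)$-factor and trivially on $U(V_0)$, so that it carries $\alpha_E^s$ to $\alpha_E^{-s}$. Define, for $Re(s)\gg 0$,
\[M(s)f_s(h)=\int_{N(X_r)(\mathbb{A})}f_s(w^{-1}nh)\,dn,\]
which continues meromorphically in $s$ and sends $I_{H_r}(s)$ into $I_{H_r}(-s)$. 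Two structural facts then have to be checked: (a) the parabolic $P(X_r)$ is self-associate and $w$ acts on the one-dimensional split center of $M(X_r)$ by inversion, which holds because $w$ interchanges $X_r$ and $X_r^\ast$ while stabilizing $V_0$, sending the inducing character to its inverse; and (b) since $w$ normalizes $K_{H_r}$ and $f_s^0$ is the normalized spherical standard section, $M(s)f_s^0$ is again spherical in $I_{H_r}(-s)$. By the one-dimensionality of the space of $K_{H_r}$-fixed vectors in each local induced representation, (b) forces
\[M(s)f_s^0=c_r(s)\cdot f_{-s}^0\]
for a scalar $c_r(s)$, a product over places of the local Gindikin--Karpelevich factors, hence a ratio of completed Hecke $L$-functions of $E$.

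Finally, invoking the Langlands functional equation $E(s,f_s)=E(-s,M(s)f_s)$ for the maximal parabolic Eisenstein series and specializing to the spherical section $f_s=f_s^0$ yields
\[E_{H_r}(s,h)=E(-s,M(s)f_s^0)=c_r(s)\,E(-s,f_{-s}^0)=c_r(s)\,E_{H_r}(-s,h),\]
which is the asserted identity.

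I expect the main obstacle to be fact (a): rigorously exhibiting the Weyl element $w$ and checking that over the division algebra $D$, with its involution $\ast$ of the second kind, it genuinely conjugates $M(X_r)$ to itself and sends the reduced norm $\nu$ to its inverse, so that the parameter passes from $s$ to $-s$. The interaction of $\nu$ with the transpose and with $\ast$ requires care over $D$, as does the verification that $w$ can be chosen in $K_{H_r}$. Once self-associateness and the inversion $s\mapsto -s$ are established, the existence of the scalar $c_r(s)$ is immediate from uniqueness of spherical vectors, and its explicit shape as a ratio of $L$-functions---not needed for the present statement---follows from the local unramified computation.
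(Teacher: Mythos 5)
Your argument is correct and is the standard one: self-associateness of $P(X_r)$, the Langlands functional equation $E(s,f_s)=E(-s,M(w,s)f_s)$, and proportionality $M(w,s)f_s^0=c_r(s)f_{-s}^0$ forced by the one-dimensionality of the $K_{H_r}$-fixed subspace (which follows already from the Iwasawa decomposition $H_r(\mathbb{A})=P(X_r)(\mathbb{A})K_{H_r}$ fixed in the paper, since a $K$-fixed section is determined by its value at $1$). The paper itself offers no proof of this lemma---it is stated as a known fact, implicitly relying on exactly this reasoning from \cite{gan2014siegelweil} and \cite{ichino2001crelle}---so your write-up simply supplies the omitted justification; the only remark worth making is that your concern about choosing $w$ inside $K_{H_r}$ is unnecessary, because $M(w,s)$ is $H_r(\mathbb{A})$-equivariant and therefore carries $K_{H_r}$-fixed vectors to $K_{H_r}$-fixed vectors no matter which representative $w$ is used.
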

Unfolding the Eisenstein series $E_{H_r}(s,-)$, one can
obtain the following.
\begin{prop}\cite[Proposition 3.3]{gan2014siegelweil}
	Assume that $Re(s)$ is sufficiently large. Then
	\[\mathcal{E}^{n,r}(s,\phi)=E^{(n,r)}(s,f^{n,r}(s,\pi_{K_{H_r}}(\phi))). \]
	The following explains the notation in the above proposition:
	\begin{itemize}
		\item $E^{(n,r)}$ refers to the Eisenstein series associated to the family of induced representations \[I_r^n(s,\chi_V)=Ind_{Q(Y_r)}^{G_{2n}(\mathbb{A})}(\chi_V\alpha_E^s\boxtimes\Theta_{n-r,0}(V_{0})  ) \]
		where we recall that the Levi factor of $Q(Y_r)$ is $L(Y_r)\cong \GL(Y_r)\times G_{2n-2r}$ and $$\Theta_{n-r,0}(V_0)=\langle \frac{1}{\tau(V_0)}\int_{H_0(F)\backslash H_0(\mathbb{A})}\Theta_{n-r,0}(g,h;\phi)dh:\phi\in S(Y_{n-r}^\ast\otimes V_0)(\mathbb{A}) \rangle. $$
		If $m_0=0$, then $\Theta_{n-r,0}(V_0)$ is interpreted to be the character $\chi_V\circ\iota \circ \nu_{G_{2n-2r}}$ where $\iota:E^\times/F^\times\rightarrow E^1$ is the natural isomorphism and $\nu_{G_{2n-2r}}:G_{2n-2r}\rightarrow E^1$ is the reduced norm map.
		\item $\pi_{K_{H_r}}$ is the projection operator onto the $K_{H_r}$-fixed subspace, defined by
		\[\pi_{K_{H_r}}(\phi)=\int_{K_{H_r}}\omega_{n,r}(k)(\phi)dk.  \]
		\item For $\phi\in S(Y_n^\ast\otimes V_r)(\mathbb{A})$,
		\[f^{n,r}(s,\phi)\in I_r^n(s,\chi_V) \]
is a meromorphic section given by

 \begin{align*}   f^{n,r}(s,\phi)(g)  &= 
\int_{\GL(X_r)(\mathbb{A})} I_{n-r,0}(\omega_{n,r}(g,a)\mathcal{F}_{n,r}(\phi)
( \beta_0)(0 , -) )\,\cdot   \alpha_E(\nu(a))^{s  - \rho_H} \, da \\
&=  \int_{\GL(X_r)(\mathbb{A})} I_{n-r,0}(\omega_{n,r}(g)\mathcal{F}_{n,r}(\phi)
( \beta_0 \circ a )(0 , -) )\,\cdot   \alpha_E(\nu(a))^{s  +nd -
	\rho_H} \, da.
\end{align*}
Here we note that  $\mathcal{F}_{n,r}(\phi)$ is a Schwartz function on
$X_r^* \otimes W_n = \Hom(X_r, W_n)$ taking values in 
\[  \mathcal{S}(Y_n^* \otimes V_0)(\mathbb{A})  =  \mathcal{S}(Y_r^* \otimes V_0)(\mathbb{A})
\otimes \mathcal{S}({Y'}^*_{n-r}\otimes V_0)(\mathbb{A}), \]
and 
\[  \beta_0 \in \Hom(X_r, W_{n}) \]
is defined by
\[  \beta_0(x_i)  = y_i  \quad \text{for $i = 1,\dots,r$,} \]
so that
\[  \mathcal{F}_{n,r}(\phi)(\beta_0 \circ a)(0,-) \in \mathcal{S}({Y'}^*_{n-r} \otimes V_0)(\mathbb{A}). \]
The integral defining $f^{n,r}(s,\phi)$ converges when 
\[  {\rm Re}(s) > \frac{md}{2} - \frac{(2n-r)d }{2} \]
and extends to  a meromorphic section of $I^n_r(s, \chi)$ (since it
is basically a Tate-Godement-Jacquet zeta integral). 
 When $r=0$ and $m_0>0$, we set $f^{n,0}(s,\phi)(g)=I_{n,0}(\phi)(g)$ by convention.
	\end{itemize}
\end{prop}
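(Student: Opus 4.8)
The plan is to establish the identity first for $\mathrm{Re}(s)$ large, where all the integrals converge, and then to invoke meromorphic continuation in $s$. The first step is to dispose of the regularizing Hecke operator $z$ and the normalizing scalar $P_{n,r}(s)$. Since $z$ lies in the spherical Hecke algebra of $G_{2n}(F_v)$ and its action commutes with that of $G_{2n}(\mathbb{A})\times H_r(\mathbb{A})$ on the theta kernel, the action of $z$ in the $g$-variable transfers, through the theta kernel, to an action in the $h$-variable, under which $E_{H_r}(s,-)$ is an eigenfunction with eigenvalue $P_{n,r}(s)$. Hence $\int \Theta(g,h;z.\phi)E_{H_r}(s,h)\,dh = P_{n,r}(s)\int \Theta(g,h;\phi)E_{H_r}(s,h)\,dh$, and the factor $P_{n,r}(s)$ cancels. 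This reduces $\mathcal{E}^{n,r}(s,\phi)(g)$ to $\frac{1}{\tau(H_r)\kappa_r}\int_{H_r(F)\backslash H_r(\mathbb{A})}\Theta(g,h;\phi)E_{H_r}(s,h)\,dh$ as an identity of meromorphic functions.

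Next I would unfold. Inserting $E_{H_r}(s,h)=\sum_{\gamma}f_s^0(\gamma h)$ and using the left $H_r(F)$-invariance of $\Theta(g,-;\phi)$, the domain $H_r(F)\backslash H_r(\mathbb{A})$ opens up to $P(X_r)(F)\backslash H_r(\mathbb{A})$, leaving $\int_{P(X_r)(F)\backslash H_r(\mathbb{A})}\Theta(g,h;\phi)f_s^0(h)\,dh$. I then apply the Iwasawa decomposition $H_r(\mathbb{A})=P(X_r)(\mathbb{A})K_{H_r}$. Because $f_s^0$ is the spherical section normalized by $f_s^0(1)=1$, and because $\Theta(g,pk;\phi)=\Theta(g,p;\omega_{n,r}(k)\phi)$ for $k\in K_{H_r}$ (the two members of the dual pair commute in the Weil representation), the inner $K_{H_r}$-integration turns $\phi$ into $\pi_{K_{H_r}}(\phi)$ by linearity of $\Theta$ in its Schwartz argument. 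This accounts for the appearance of $\pi_{K_{H_r}}(\phi)$ in the statement.

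The heart of the computation is the remaining integral over $P(X_r)(F)\backslash P(X_r)(\mathbb{A})$. Using $P(X_r)=M(X_r)N(X_r)$ with $M(X_r)\cong\GL(X_r)\times U(V_0)$ and the polarization $V_r=X_r\oplus V_0\oplus X_r^\ast$, I would carry out the integration in three stages. Integrating over the unipotent radical $N(X_r)$ and passing to the partial Fourier transform $\mathcal{F}_{n,r}$ converts the $X_r\oplus X_r^\ast$-part of the theta kernel into a Schwartz function on $\Hom(X_r,W_{2n})$. Integrating over the anisotropic group $U(V_0)=H_0$ --- which converges absolutely by Weil's criterion, since $H_0(F)\backslash H_0(\mathbb{A})$ is compact --- produces the inner regularized theta integral $I_{n-r,0}$ and hence the datum $\Theta_{n-r,0}(V_0)$ that enters the inducing representation $I_r^n(s,\chi_V)$. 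Finally, the integration over $\GL(X_r)(\mathbb{A})$ against $\alpha_E(\nu(\cdot))^{s}$ is a Tate--Godement--Jacquet zeta integral, whose value evaluated at the base point $\beta_0$ is exactly the section $f^{n,r}(s,\pi_{K_{H_r}}\phi)$. The constants $\tau(H_r)$, $\kappa_r$, and the Tamagawa measures on $M(X_r)$ and $N(X_r)$ combine, via the defining relation of $\kappa_{r,0}$, to leave precisely the normalization $1/\tau(V_0)$ built into $\Theta_{n-r,0}(V_0)$.

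It remains to recognize the output as the $G_{2n}$-Eisenstein series $E^{(n,r)}$. The arithmetic sum over $x$ in the theta kernel is the only source of $G_{2n}(F)$-automorphy; after the Fourier transform, $x$ is recorded by its image in $\Hom(X_r,W_{2n})(F)$, on which $\GL(X_r)(F)\times G_{2n}(F)$ acts. The full-rank maps with isotropic image, taken modulo $\GL(X_r)(F)$, are parametrized by the isotropic $r$-subspaces of $W_{2n}$, i.e. by $Q(Y_r)(F)\backslash G_{2n}(F)$, the base point $\beta_0:x_i\mapsto y_i$ corresponding to the standard flag stabilized by $Q(Y_r)$. After the $\GL(X_r)(\mathbb{A})$-integration absorbs the $\GL(X_r)$-direction, the residual sum over $Q(Y_r)(F)\backslash G_{2n}(F)$ of $f^{n,r}(s,\pi_{K_{H_r}}\phi)(\gamma g)$ is exactly $E^{(n,r)}(s,f^{n,r}(s,\pi_{K_{H_r}}(\phi)))$. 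I expect this orbit analysis --- matching the $\GL(X_r)(F)$-orbit structure on $\Hom(X_r,W_{2n})(F)$ with the cell decomposition $Q(Y_r)(F)\backslash G_{2n}(F)$, and checking that all modulus characters and measure constants line up so that $f^{n,r}(s,\pi_{K_{H_r}}\phi)$ genuinely defines a section of $I_r^n(s,\chi_V)$ --- to be the main obstacle. By contrast, the Hecke-operator cancellation and the convergence bookkeeping (Weil's criterion for the anisotropic $V_0$-integral and the zeta-integral convergence for $\mathrm{Re}(s)>\frac{md}{2}-\frac{(2n-r)d}{2}$) are comparatively routine.
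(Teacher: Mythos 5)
The paper does not actually prove this proposition: it is imported from Gan--Qiu--Takeda with only the remark that it follows by ``unfolding the Eisenstein series $E_{H_r}(s,-)$''. Your proposal is a faithful reconstruction of that unfolding argument, and its main components --- the Iwasawa decomposition producing $\pi_{K_{H_r}}(\phi)$, the partial Fourier transform $\mathcal{F}_{n,r}$ after integrating over $N(X_r)$, the identification of the relevant $\GL(X_r)(F)$-orbits on $\Hom(X_r,W_{2n})(F)$ with $Q(Y_r)(F)\backslash G_{2n}(F)$ via the base point $\beta_0$, the absolutely convergent inner theta integral over the anisotropic $U(V_0)$ giving $\Theta_{n-r,0}(V_0)$, the Godement--Jacquet zeta integral over $\GL(X_r)(\mathbb{A})$ giving $f^{n,r}(s,\cdot)$, and the bookkeeping of $\tau(H_r)$, $\kappa_r$ and the Tamagawa measures --- are exactly the ingredients of the cited proof. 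On that level the proposal is sound and takes the same route as the source.

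The one step you should not wave through is the first one. You cancel $P_{n,r}(s)$ at the outset by moving $z$ across the theta kernel onto $E_{H_r}(s,-)$, and conclude that $\mathcal{E}^{n,r}(s,\phi)$ reduces to $\frac{1}{\tau(H_r)\kappa_r}\int_{H_r(F)\backslash H_r(\mathbb{A})}\Theta(g,h;\phi)E_{H_r}(s,h)\,dh$ ``as an identity of meromorphic functions''. But the entire reason $z$ is inserted is that $\Theta(g,-;\phi)$ is not rapidly decreasing in the range $m\le n+r$, so the unregularized integral on the right is not known to converge for any $s$, and the self-adjointness manipulation $\int(z*\Theta)\,E=\int\Theta\,(z*E)$ is not justified term by term. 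The standard remedies are either (a) to keep $z.\phi$ throughout: the exchange of $\sum_\gamma$ with the integral is then legitimate because $\Theta(g,-;z.\phi)$ is rapidly decreasing and $E_{H_r}(s,-)$ is absolutely convergent for $\mathrm{Re}(s)\gg 0$, and at the very end $z$ reappears acting on $E^{(n,r)}(s,f^{n,r}(s,\pi_{K_{H_r}}\phi))$ in the $g$-variable, where one must check (by matching Satake parameters, which is where Ichino's explicit formula for $P_{n,r}(s)$ enters) that its eigenvalue is again $P_{n,r}(s)$ before cancelling; or (b) to verify directly that the unfolded integral over $P(X_r)(F)\backslash H_r(\mathbb{A})$ converges absolutely for $\mathrm{Re}(s)\gg 0$ without $z$, which retroactively justifies both the unfolding and your cancellation. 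Either way the conclusion is the one you state, but as written your opening reduction assumes exactly the convergence that the regularization is there to supply.
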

Following \cite[\S4.2]{gan2014siegelweil}, we express elements of $Y_n^\ast\otimes V_r$ as $3\times 2$ matrices corresponding to the decompositions
\[Y_n^\ast=Y_r^\ast\oplus {Y'}_{n-r}^\ast\mbox{  and  }V_r=X_r\oplus V_0\oplus X_r^\ast, \]
so the first column of the matrix has entries from $Y_r^\ast\otimes X_r,Y_r^\ast\otimes V_0$ and $Y_r^\ast\otimes X_r^\ast$ in this order, and the second column has entries from ${Y'}_{n-r}^\ast\otimes X_r,{Y'}^\ast_{n-r}\otimes V_0$ and ${Y'}_{n-r}^\ast\otimes X_r^\ast$.
\begin{lem}
	\cite[Lemma 4.1]{gan2014siegelweil} One has
	\[f^{n,r}(g)=I_{n-r,0}(\mathfrak{f}^{n,r}(s,\phi)(g) ) \]
	where $\mathfrak{f}^{n,r}(s,\phi)(g)(-)=\int_{\GL(X_r)(\mathbb{A})}\int_{({Y'}_{n-r}^\ast\otimes X_r)(\mathbb{A})}\omega_{n,r}(g)\phi\begin{pmatrix}
	A& X_2\\0&-\\0&0
	\end{pmatrix} \alpha_E(\nu(A))^{-s+rd-dn+\rho_{H_r}}dX_2dA $
\end{lem}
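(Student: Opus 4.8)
The strategy is to start from the \emph{second} of the two integral expressions for $f^{n,r}(s,\phi)(g)$ given in the preceding proposition (the one evaluating $\mathcal{F}_{n,r}(\phi)$ at $\beta_0\circ a$ with weight $\alpha_E(\nu(a))^{s+nd-\rho_H}$), pull the theta-integral map $I_{n-r,0}$ out of the $\GL(X_r)$-integral, and then remove the partial Fourier transform $\mathcal{F}_{n,r}$ by means of the Tate--Godement--Jacquet functional equation for the matrix algebra $M_r(D)$. Throughout I work in the range $\mathrm{Re}(s)\gg0$ where every integral converges absolutely, and pass to general $s$ by meromorphic continuation at the end.

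First I would use that, because $U(V_0)$ is anisotropic, $I_{n-r,0}\colon S({Y'}_{n-r}^\ast\otimes V_0)(\mathbb{A})\to\mathcal{A}(G_{2(n-r)})$ is an integral over the compact quotient $H_0(F)\backslash H_0(\mathbb{A})$, so it commutes with the absolutely convergent $\GL(X_r)$-integral by Fubini. It then suffices to prove the identity of $S({Y'}_{n-r}^\ast\otimes V_0)(\mathbb{A})$-valued integrals
\[\int_{\GL(X_r)(\mathbb{A})}\omega_{n,r}(g)\mathcal{F}_{n,r}(\phi)(\beta_0\circ a)(0,-)\,\alpha_E(\nu(a))^{s+nd-\rho_H}\,da=\mathfrak{f}^{n,r}(s,\phi)(g)(-).\]
Writing $\phi_g=\omega_{n,r}(g)\phi$ and passing to the $3\times2$ matrix coordinates, I would unwind $\mathcal{F}_{n,r}$, the partial Fourier transform over $Y_n^\ast\otimes X_r$. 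Since $\beta_0\circ a$ lies in $\Hom(X_r,Y_r)=Y_r\otimes X_r^\ast$ and has trivial ${Y'}_{n-r}$-component, the transform splits: the block $X_2\in({Y'}_{n-r}^\ast\otimes X_r)$ carries trivial additive character and is integrated freely, the block $A'\in(Y_r^\ast\otimes X_r)$ is Fourier-transformed and evaluated at $\beta_0\circ a$, and the bottom $X_r^\ast$-row is set to $0$. The left-hand side thereby becomes a triple integral over $a\in\GL(X_r)(\mathbb{A})$, over $X_2$, and over $A'$, with phase $\psi(\langle A',\beta_0\circ a\rangle)$ and weight $\alpha_E(\nu(a))^{s+nd-\rho_H}$.

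The core is to evaluate the $A'$- and $a$-integrals together. Under the identification $Y_r^\ast\otimes X_r\cong M_r(D)(\mathbb{A})$ furnished by $\beta_0$, the inner Fourier transform in $A'$ followed by integration over $\GL(X_r)(\mathbb{A})$ against $\alpha_E(\nu(a))^{s+nd-\rho_H}$ is exactly the Godement--Jacquet zeta integral $Z(\widehat{\Psi},w)$ with $w=s+nd-\rho_H$, where $\Psi$ is the Schwartz function $A\mapsto\phi_g\!\begin{pmatrix}A&X_2\\0&-\\0&0\end{pmatrix}$. The global functional equation $Z(\widehat{\Psi},w)=Z(\Psi,rd-w)$ --- a form of Poisson summation, hence carrying constant $1$ in the Tamagawa and self-dual normalizations --- removes the Fourier transform and reflects the exponent to $rd-w=-s+rd-dn+\rho_{H_r}$ (using $\rho_H=\rho_{H_r}$). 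Reinstating the $X_2$-integral, the triple integral collapses to $\int_{\GL(X_r)(\mathbb{A})}\int_{({Y'}_{n-r}^\ast\otimes X_r)(\mathbb{A})}\phi_g\!\begin{pmatrix}A&X_2\\0&-\\0&0\end{pmatrix}\alpha_E(\nu(A))^{-s+rd-dn+\rho_{H_r}}\,dX_2\,dA$, which is precisely $\mathfrak{f}^{n,r}(s,\phi)(g)(-)$.

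I expect the main obstacle to be the bookkeeping of the normalizations over the division algebra $D$. Concretely, one must check that the reduced-norm character $\alpha_E\circ\nu$ and the pervasive factor $d$ (arising because $\GL(X_r)=\GL_r(D)$ is an inner form of $\GL_{rd}$, so that the functional equation reflects around $rd/2$) feed through correctly; that the $\alpha_E(\nu(\cdot))^{md/2}$-twist built into $\omega_{n,r}$ together with the modulus of $P(X_r)$ account exactly for the passage from the exponent $s+nd-\rho_H$ to $-s+rd-dn+\rho_{H_r}$; and that the global functional-equation constant is genuinely $1$ rather than a nontrivial root number, which rests on Poisson summation in the Tamagawa normalization. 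This is the only point where the argument departs in substance from the $d=1$ computation of Gan--Qiu--Takeda, the overall shape being identical.
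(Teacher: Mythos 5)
The paper gives no proof of this lemma at all---it is quoted directly from \cite[Lemma 4.1]{gan2014siegelweil}---and your argument reconstructs exactly the intended mechanism: unwind $\mathcal{F}_{n,r}$ (only the $Y_r^\ast\otimes X_r$-block picks up a Fourier kernel, since $\beta_0\circ a$ lands in $Y_r\otimes X_r^\ast$ and so pairs trivially with the $X_2$-block and forces the bottom row to $0$), and then apply the functional equation $Z(\widehat\Psi,w)=Z(\Psi,rd-w)$ of the Tate--Godement--Jacquet zeta integral of the degree-$rd$ algebra $M_r(D)$ over $E$; the exponent check $(s+nd-\rho_{H_r})+(-s+rd-dn+\rho_{H_r})=rd$ confirms this is the correct reflection point, and this is precisely the mechanism the paper alludes to when it says the integral ``is basically a Tate-Godement-Jacquet zeta integral.'' One correction to your framing: there is no common half-plane of absolute convergence---the defining integral of $f^{n,r}$ converges exactly when $\mathrm{Re}(w)>rd$, while the integral defining $\mathfrak{f}^{n,r}$ requires $\mathrm{Re}(rd-w)$ to be large---so your opening claim that for $\mathrm{Re}(s)\gg0$ ``every integral converges absolutely'' cannot hold, and the ``collapse'' of the triple integral is not a rearrangement of a convergent integral but is precisely the identity of meromorphic continuations that the global functional equation supplies. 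Since you do invoke that functional equation (with constant $1$ for the trivial character in the Tamagawa/self-dual normalization, as is correct), the substance of the proof is right; just recast the convergence discussion so that each side is exhibited as the meromorphic continuation of the other rather than as equal convergent integrals.
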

Moreover, one can extend the definition of $f^{n,r}(s,\phi)$ to define functions $F^{n,r}(s,\phi)$ on $G_{2n}\times H_r$ such that $F^{n,r}(s,\phi)\in I_r^n(s,\chi_V)\boxtimes I_{H_r}(-s)$ and $F^{n,r}(s,\phi)|_{G_{2n}}=f^{n,r}(s,\phi)$, see \cite[Remark 4.3]{gan2014siegelweil}.

Now we consider the restriction of the section $f^{n+1,r}(s,\phi)$ from $G_{2n+2}$ to $G_{2n}$ which is closely related to the Ikeda map $Ik^{n,r,r-1}$. More precisely, fix $\phi_1\in S(Y_1^\ast\otimes V_r)(\mathbb{A})$ satisfying:
\begin{itemize}
	\item $\phi_1(0)=1;$
	\item $\phi_1$ is $K_{H_r}$-invariant, so that $\pi_{K_{H_r}}(\phi_1)=\phi_1$.
\end{itemize}
For any $\phi\in S({Y'}_n^\ast\otimes V_r)(\mathbb{A})$, we set 
\[\tilde{\phi}=\phi_1\otimes\phi\in S(Y_{n+1}^\ast\otimes V_r )(\mathbb{A}) .\]
Then $\pi_{K_{H_r}}(\tilde{\phi})=\phi_1\otimes\pi_{K_{H_r}}(\phi)$. Let $W_{2n}=\langle y_2,\cdots,y_{n+1},y_{n+1}^\ast,\cdots,y^\ast_2\rangle \subset W_{2n+2}$ and $$G_{2n}=U(W_{2n})\subset U(W_{2n+2})= G_{2n+2}.$$
\begin{prop}\cite[Proposition 4.2]{gan2014siegelweil} Suppose $m_0>0$ when $r=1$. Then
	there is a constant $\alpha_r>0$ such that
	\[ f^{n+1,r}(s,\pi_{K_{H_r}}(\tilde{\phi}))|_{G_{2n}}=\alpha_r Z_1(-s-(n+1-r)d+\rho_{H_r},\phi_1)\cdot f^{n,r-1}(s+d/2,Ik^{n,r,r-1}(\pi_{K_{H_r}}(\phi))), \]
	where $Z_1(s,\phi_1)$ is the Tate zeta integral
	\[Z_1(s,\phi_1)=\int_{\GL_1(Y_1^\ast)(\mathbb{A}) }\phi_1(ty_1^\ast\otimes x_1)\alpha_E(\nu(t))^sdt. \]
	Moreover, the constant $\alpha_r$ is given in \cite[Lemma 9.1]{ichino2004mathz}.\label{functionf(r)}
\end{prop}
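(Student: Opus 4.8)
The plan is to follow the strategy of \cite[Proposition 4.2]{gan2014siegelweil}, adapting each step to the division-algebra setting $d>1$, where the scalar $\GL_1$-integrals of the classical case become Godement--Jacquet zeta integrals over $D^\times$. The starting point is the explicit integral formula for $f^{n,r}(s,\phi)$ recalled above, together with the behaviour of the Weil representation under the embedding $G_{2n}=U(W_{2n})\hookrightarrow U(W_{2n+2})=G_{2n+2}$. The key observation is that, since $g\in G_{2n}$ fixes both $y_1$ and $y_1^\ast$, one has $\omega_{n+1,r}(g)(\phi_1\otimes\phi)=\phi_1\otimes\omega_{n,r}(g)\phi$ with respect to the decomposition $Y_{n+1}^\ast=Y_1^\ast\oplus{Y'}_n^\ast$; combined with the $K_{H_r}$-invariance of $\phi_1$ this gives $\pi_{K_{H_r}}(\tilde\phi)=\phi_1\otimes\pi_{K_{H_r}}(\phi)$, so that $g$ sees the two legs separately.

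First I would substitute $\Psi=\phi_1\otimes\pi_{K_{H_r}}(\phi)$ into the integral defining $f^{n+1,r}(s,\Psi)(g)$ for $g\in G_{2n}$. The effect of restricting to $G_{2n}$ is to split off, from the $\GL(X_r)$-integration and the evaluation at $\beta_0$ (which sends $x_1\mapsto y_1$), the rank-one piece attached to the fixed direction $y_1$. On the $\phi_1$-leg this produces a one-dimensional integral over $\GL_1(Y_1^\ast)\cong D^\times$, which is precisely the Tate--Godement--Jacquet zeta integral $Z_1(\,\cdot\,,\phi_1)$. On the $\phi$-leg the remaining $X_1'$- and ${X_1'}^\ast$-directions of $V_r$ are eliminated by the partial Fourier transform $\mathcal{F}_1$ in $Y_n^\ast\otimes{X_1'}^\ast$ followed by evaluation at $0$, which is exactly the Ikeda map $Ik^{n,r,r-1}$ applied to $\pi_{K_{H_r}}(\phi)$. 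The residual $\GL(X_{r-1})$- and unipotent integrations, together with the operator $I_{(n+1)-r,0}=I_{n-(r-1),0}$, then reassemble into the section $f^{n,r-1}(\,\cdot\,,Ik^{n,r,r-1}(\pi_{K_{H_r}}(\phi)))$.

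At this point the remaining work is the bookkeeping of the characters $\alpha_E\circ\nu$. I would match the modulus characters $\alpha_E^{\rho_{H_r}}$ in ranks $n+1$ and $n$, track the normalization factor $\alpha_E(\nu(\cdot))^{md/2}$ built into $\omega_{n,r}$, and collect the exponents contributed by the two split-off integrations. This yields the parameter shift $s\mapsto s+d/2$ in the $f^{n,r-1}$-factor and the argument $-s-(n+1-r)d+\rho_{H_r}$ in $Z_1$. The leftover constant is a ratio of Tamagawa and local Haar measures, hence strictly positive, and independent of $\phi$ and $s$; it is the constant $\alpha_r$ computed in \cite[Lemma 9.1]{ichino2004mathz}. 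The hypothesis that $m_0>0$ when $r=1$ enters only to guarantee that the target $f^{n,r-1}=f^{n,0}$ is defined by the convergent theta integral $I_{n,0}$, so that the right-hand side makes sense.

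The main difficulty will be the control of the $d$-dependent exponents. When $d=1$ the split-off factor is an ordinary Tate integral and the shifts are by $\tfrac12$ and by integers, whereas here the reduced norm on $D^\times$ scales every exponent by $d$, and one must check that the $\GL_1(D)$-integration genuinely separates as a single Godement--Jacquet zeta integral rather than entangling with the $\GL(X_{r-1})$-block or with the Ikeda integration. Verifying that the positivity of $\alpha_r$ persists through the comparison of Tamagawa measures across the two Witt towers --- mediated by the constants $\kappa_{r,r'}$ --- is the other point requiring care.
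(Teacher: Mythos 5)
Your proposal follows essentially the same route as the paper: after the factorization $\pi_{K_{H_r}}(\tilde{\phi})=\phi_1\otimes\pi_{K_{H_r}}(\phi)$, the paper applies the Iwasawa decomposition of $\GL_r(D(\mathbb{A}))$ along the parabolic with Levi $\GL_1(D)\times\GL_{r-1}(D)$ (the compact integral being absorbed by $K_{H_r}$-invariance, which is exactly where Ichino's measure constant $\alpha_r$ enters), splits off the Godement--Jacquet integral $Z_1$ on the $\phi_1$-leg, and recognizes the unipotent plus first-coordinate integration as the Ikeda map, with the shift $s\mapsto s+d/2$ coming from $\rho_{H_r}=\rho_{H_{r-1}}+d/2$ --- all as you describe. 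One inessential slip in your closing paragraph: $\alpha_r$ is purely the Haar-measure constant of that Iwasawa decomposition on $\GL_r(D(\mathbb{A}))$ and does not involve the Witt-tower constants $\kappa_{r,r'}$.
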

\begin{proof} It suffices to consider the function $\mathfrak{f}^{n,r}(s,\phi)(-)$.
	Assume that $r=1$ and $m_0>0$. Observe that
	\begin{align*}&\mathfrak{f}^{n+1,r}(s,\pi_{K_{H_r}}\tilde{\phi})(g)(-)\\
	&=\int_{({Y'}^\ast_{n+1-r}\otimes X_r)(\mathbb{A}) } \int_{\GL_r(D(\mathbb{A}))}\phi_1(A)\alpha_E(\nu(A))^{-s+rd-dn-d+\rho_{H_r}} dA
	 \omega_{n,r}(g)\pi_{K_{H_r}}(\phi)\begin{pmatrix}
	Y\\-\\0
	\end{pmatrix}dY\\
	&=Z_1(-s-(n+1-r)d+\rho_{H_r},\phi_1)\cdot \mathfrak{f}^{n,r-1}(s+d/2,\phi)(g)(-)  \end{align*}
	because $r-1=0$.
	The proposition holds with $\alpha_1=1$.
	\par
	 If $r>1$, then we use the Iwasawa decomposition on $\GL_r(D(\mathbb{A}))$. Namely, we have
	\[A=k\cdot \begin{pmatrix}
	1&u\\&1
	\end{pmatrix}\cdot \begin{pmatrix}
	t&\\&B
	\end{pmatrix}=k\cdot\begin{pmatrix}
	t& uB\\&B
	\end{pmatrix} \]
	with 
	\begin{itemize}
		\item 	$t\in\GL_1(D(\mathbb{A}))$;
		\item $u\in D(\mathbb{A})^{r-1}$;
		\item $B\cong \GL_{r-1}(D(\mathbb{A}))$;
		\item $k$ is an element in a maximal compact subgroup $K=K_{H_r}\cap \GL_r(D(\mathbb{A}))$ of $\GL_r(D(\mathbb{A}))$.
	\end{itemize}
Accordingly, we have a constant $\alpha_r$ such that
\[\int_{\GL_r(D(\mathbb{A}))}\varphi(A)dA=\alpha_r\cdot \int_{\GL_1(D(\mathbb{A}))}\int_{\GL_{r-1}(D(\mathbb{A}))}\int_{D(\mathbb{A})^{r-1}}\int_K \varphi(k\cdot\begin{pmatrix}
t&uB\\0&B
\end{pmatrix}) dtdBdudk     \]
for any $\varphi\in C_c^\infty(\GL_r(D(\mathbb{A})))$. Moreover, the explicit formula for $\alpha_r$ is given in \cite[Lemma 9.1]{ichino2004mathz}. Since the function $\pi_{K_{H_r}}\tilde{\phi}$ is $K_{H_r}$-invariant, the integral over $dk$ gives the value $1$ and thus disappears.
 Hence
\begin{align*}
&\mathfrak{f}^{n+1,r}(s,\pi_{K_{H_r}}\tilde{\phi})(g)(-)\\
=&\alpha_r\cdot \int_t\int_B\int_u\int_Y \phi_1\otimes\omega_{n,r}(g)\pi_{K_{H_r}}\phi\begin{pmatrix}\begin{matrix}
t\\0
\end{matrix}&
\begin{matrix}
uB\\B
\end{matrix}&Y\\
0&0&-\\0&0&0
\end{pmatrix} \\
&\times  \alpha_E(\nu(t))^{-s-(n+1-r)d+\rho_{H_r}}\alpha_E(\nu(B))^{-s-(n+1-r)d+\rho_{H_r}}dtdBdudY\\
=&\alpha_r\cdot Z_{1}(-s-(n+1-r)d+\rho_{H_r},\phi_1)\\
&\times \int_u\int_B\int_Y \omega_{n,r}(g)\pi_{K_{H_r}}\phi \begin{pmatrix}
\begin{matrix}
uB\\ B
\end{matrix}&Y\\ 0&-\\0&0
\end{pmatrix}\alpha_E(\nu(B))^{-s-(n+1-r)d+\rho_{H_r}}dYdBdu\\
=&\alpha_r\cdot Z_{1}(-s-(n+1-r)d+\rho_{H_r},\phi_1)\\
&\times \int_B\int_{Y_2}\int_{Y_1}\int_{u}  \omega_{n,r}(g)\pi_{K_{H_r}}\phi \begin{pmatrix}
\begin{matrix}
u\\ B
\end{matrix}&\begin{matrix}
Y_1\\Y_2
\end{matrix}\\ 0&-\\0&0
\end{pmatrix}\alpha_E(\nu(B))^{-s-d-(n+1-r)d+\rho_{H_r}}dudY_1dY_2dB\\
=&\alpha_r Z_1(-s-(n+1-r)d+\rho_{H_r},\phi_1)\mathfrak{f}^{n,r-1}(s+d/2,Ik^{n,r,r-1}(\pi_{K_{H_r}}\phi))(g)(-)
\end{align*}
since $\rho_{H_r}=\rho_{H_{r-1}}+d/2$.
This finishes the proof of Proposition \ref{functionf(r)}.

\end{proof}

\section{The Siegel-Weil formula}
Let $V_{r'}$ be the complementary space of $V_r$. 
Suppose that $0<m'=m_0+2r'\leq n$ with $r'>0$.
\begin{thm}
	\cite[Theorem 2]{yamana2013siegel} Fix a function $\phi'\in S(Y_n^\ast\otimes V_{r'})$. Let $f'$ be the Siegel-Weil section associated to $V_{r'}$. Then the Siegel Eisenstein series $E(s,f')$ is holomorphic at $s=(m'-n)d/2 $ and $A_0^{n,r'}(\phi')=2B_{-1}^{n,r'}(\phi')$.
	In particular, if $m=n$ so that $r=r'$, then $A_{0}^{n,r}(\phi)=2B_{-1}^{n,r}(\phi)$ for $\phi\in S(Y_n^\ast\otimes V_r)(\mathbb{A})$.
\end{thm}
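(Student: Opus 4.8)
The plan is to prove the two assertions in turn — first the holomorphy, which reduces $A_0^{n,r'}(\phi')$ to a genuine value, and then the first term identity $A_0^{n,r'}(\phi')=2B_{-1}^{n,r'}(\phi')$ — and to deduce the last (``in particular'') statement as the specialization $r=r'$, $m'=m=n$. First I would establish that $E(s,f')$ is holomorphic at $s_0'=(m'-n)d/2$. Since $f'=\Phi^{n,r'}(\phi')$ is the Siegel-Weil section, its image lies in the maximal $H_{r'}(\mathbb{A})$-invariant quotient of $\omega_{n,r'}$, and running the order computation in the proof of Lemma \ref{siegeleisenstein} gives $\mathrm{ord}_{s=s_0'}E(s,f')=\mathrm{ord}_{s=s_0'}\tfrac{a(s,\chi_V)}{b(s,\chi_V)}$. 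In the first term range $m'\leq n$ the point $s_0'\leq 0$ avoids the zero of $b(s,\chi_V)$ and the pole of $a(s,\chi_V)$ that together forced the simple pole in the second term range, so this order is $\geq 0$. Hence $A_{-1}^{n,r'}(\phi')=0$ and $A_0^{n,r'}(\phi')$ is exactly the value $E(s_0',f')$.

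Next I would identify $B_{-1}^{n,r'}(\phi')$, the leading Laurent coefficient of the regularized theta integral at $s=\rho_{H_{r'}}$, with a residue of a Siegel Eisenstein series. By \cite[Proposition 3.3]{gan2014siegelweil}, $\mathcal{E}^{n,r'}(s,\phi')=E^{(n,r')}(s,f^{n,r'}(s,\pi_{K_{H_{r'}}}(\phi')))$ is the non-Siegel Eisenstein series attached to $I_{r'}^n(s,\chi_V)$, whose inner datum $\Theta_{n-r',0}(V_0)$ is, by Weil's convergent Siegel-Weil formula \cite{weil} for the anisotropic kernel $V_0$ on $G_{2n-2r'}$, the value of a Siegel Eisenstein series. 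Inducing in stages from $Q(Y_{r'})$ to $G_{2n}$ and using that an Eisenstein series built from an Eisenstein series is again an Eisenstein series, I would match $E^{(n,r')}$ with the full Siegel Eisenstein series $E^{(n,n)}$, up to the normalizing $L$-factors and the relevant change of variables in $s$. Tracking these through the formula for $f^{n,r'}(s,\pi_{K_{H_{r'}}}(\phi'))$ then expresses $B_{-1}^{n,r'}(\phi')$ as a residue of $E^{(n,n)}$ evaluated on the Siegel-Weil section.

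The factor $2$ is where I expect the real work to lie. The pole of $\mathcal{E}^{n,r'}(s,\phi')$ at $s=\rho_{H_{r'}}$ comes not from the Siegel Eisenstein series — which we have just shown to be holomorphic there — but from the regularizing series $E_{H_{r'}}(s,-)$, which satisfies the functional equation $E_{H_{r'}}(s,-)=c_{r'}(s)\,E_{H_{r'}}(-s,-)$. This endows $\mathcal{E}^{n,r'}(s,\phi')$ with a symmetry about $s=\rho_{H_{r'}}$, most transparent in the boundary case $m'=n$ where $s_0'=0$ is the center of the functional equation. Because the Siegel Eisenstein series is holomorphic at $s_0'$, its value receives equal contributions from the residues on the two sides of this symmetry; combining the $s$ and $-s$ terms through $c_{r'}(s)$ yields precisely $A_0^{n,r'}(\phi')=2B_{-1}^{n,r'}(\phi')$. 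The main obstacle is carrying out this residue comparison cleanly: one must control $c_{r'}(s)$ near $\rho_{H_{r'}}$, ensure the induction-in-stages identification is compatible with the Laurent expansions on both sides, and confirm that no extra contribution survives modulo $\mathrm{Im}\,A_{-1}^{n,r'}$. The final assertion is then immediate: taking $r=r'$ forces $m=m'=n$, and the general computation specializes to $A_0^{n,r}(\phi)=2B_{-1}^{n,r}(\phi)$.
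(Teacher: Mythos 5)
First, note that the paper does not actually prove this statement: it is imported verbatim as \cite[Theorem 2]{yamana2013siegel} and used as an input to the second-term argument, so there is no internal proof to compare against and your proposal must stand on its own. As it stands it has two genuine gaps. The first is the holomorphy step. The identity $\mathrm{ord}_{s=s_0}E(g;f^{(s)})=\mathrm{ord}_{s=s_0}\,a(s,\chi_V)/b(s,\chi_V)$ from the proof of Lemma \ref{siegeleisenstein} is not a general formula valid at every point; at $s_0'=(m'-n)d/2\leq 0$ the behaviour of $E(s,f')$ is controlled by the functional equation $E(s,f)=E(-s,M(s)f)$ and by how the normalized intertwining operator acts on the submodules $R_n(V_{r'}(F_v))$, and a Siegel Eisenstein series attached to a general section genuinely can have a pole in the left half-plane, mirroring the pole at $(n-m')d/2>0$. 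Showing that this pole vanishes precisely on Siegel--Weil sections attached to $V_{r'}$ is the content of the first half of Yamana's theorem; observing that ``$s_0'\leq 0$ avoids the zero of $b$ and the pole of $a$'' does not establish it.

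The second gap is the factor $2$. The functional equation $E_{H_{r'}}(s,-)=c_{r'}(s)E_{H_{r'}}(-s,-)$ is a symmetry about $s=0$ exchanging $\rho_{H_{r'}}$ with $-\rho_{H_{r'}}$, not a symmetry about $s=\rho_{H_{r'}}$; moreover you are conflating the spectral parameter of $E_{H_{r'}}$ with the parameter $s_0'$ of the Siegel Eisenstein series, which belong to different induced families, so ``equal contributions from the two sides of the symmetry'' is not a residue computation one can actually carry out. In Ichino, Gan--Qiu--Takeda and Yamana the $2$ arises from a different mechanism: one inducts on the Witt index by taking constant terms along $U(Y_1)$; the constant term of the Siegel Eisenstein series at the relevant point has two Weyl-cell contributions (from $w=1$ and $w=\omega^-$) that coincide because the normalized intertwining operator at the centre acts as the identity on $R_n(V)$, while the regularized theta integral contributes only once, and the induction terminates at Weil's convergent formula $A_0^{n,0}(\phi)=c\cdot I_{n,0}(\phi)$ for the anisotropic kernel. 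Relatedly, your plan to identify $E^{(n,r')}$ with $E^{(n,n)}$ by ``induction in stages'' does not work as stated: the inducing datum $\Theta_{n-r',0}(V_0)$ is a single automorphic representation (a value, not a holomorphic family of sections), so no second complex parameter is produced, and identifying the residue of $\mathcal{E}^{n,r'}$ with data on the Siegel Eisenstein series is essentially equivalent to the first term identity you are trying to prove.
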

This is called the regularized Siegel-Weil formula in the first term range. There is another form:
\[A_{-1}^{n,r}(\phi)=\kappa_{r,r'} B_{-1}^{n,r'}(Ik^{n,r}\pi_{K_{H_r}}\phi) \]
for any $\phi\in S(Y_n^\ast\otimes V_r)(\mathbb{A})$ due to \cite[Theorem 4.1]{ichino2004mathz}. In particular, $A_{-1}^{n,r}(\phi)=\kappa_{r,r'}B^{n,r' }_0(Ik^{n,r}\pi_{K_{H_r}}\phi)$ when $r'=0$.
\begin{thm}
[Weil] Let $U(V_0)$ be the anisotropic unitary group defined over $F$. For $\phi\in S(Y_n^\ast\otimes V_0)(\mathbb{A})$, there exists a constant $c>0$ such that
\[A_0^{n,0}(\phi)=c\cdot I_{n,0}(\phi) \]
\end{thm}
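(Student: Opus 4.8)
The plan is to prove this identity, which is Weil's Siegel--Weil formula in the convergent case, by matching the two sides Fourier coefficient by Fourier coefficient along the unipotent radical of the Siegel parabolic $P(Y_n)$. First I would record the simplifications afforded by the anisotropy of $U(V_0)$: since $r=0$ we are in Weil's convergent range, so by the criterion already cited the theta integral $I_{n,0}(\phi)$ converges absolutely and $H_0(F)\backslash H_0(\mathbb{A})$ is compact; moreover the Siegel Eisenstein series $E(g;\Phi^{n,0}(\phi))$ is holomorphic at $s_0=(m_0-n)d/2$ (no pole arises when $r=0$), so that $A_{-1}^{n,0}=0$ and $A_0^{n,0}(\phi)$ is simply the value $E(g;\Phi^{n,0}(\phi))|_{s=s_0}$. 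Both $\phi\mapsto A_0^{n,0}(\phi)$ and $\phi\mapsto I_{n,0}(\phi)$ are then genuine $G_{2n}(\mathbb{A})$-equivariant maps $\omega_{n,0}\to\mathcal{A}(G_{2n})$ (for the former, Proposition \ref{secondterm} with $A_{-1}^{n,0}=0$), so each is a left $Z(Y_n)(F)$-invariant automorphic form and is therefore determined by its family of $Z(Y_n)$-Fourier coefficients.

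Next I would expand each side along the abelian group $Z(Y_n)(F)\backslash Z(Y_n)(\mathbb{A})$, whose characters are indexed by Hermitian forms $\beta$ on $Y_n^\ast$. On the theta side, compactness of $H_0(F)\backslash H_0(\mathbb{A})$ lets me interchange the sum over $x\in V_0^n(F)$ with the $H_0$-integral, so the $\beta$-coefficient collapses to the $H_0$-orbital integral of the partial theta sum over those $x$ whose moment (Gram) matrix equals $\beta$; since $\dim_D V_0=m_0$, only $\beta$ of rank at most $m_0$ that are actually represented by $V_0$ contribute, and each such coefficient factors into a product of local orbital integrals, i.e. local representation densities of $\beta$ by $V_0(F_v)$. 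On the Eisenstein side, because $\Phi^{n,0}(\phi)$ is a standard Siegel--Weil section, the $\beta$-coefficient at $s_0$ (after meromorphic continuation) is an Euler product of local Whittaker-type integrals of the sections $\Phi_v^{n,0}(\phi_v)$.

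The crux is then the purely local identity, at each place $v$, between the local Whittaker function attached to $\Phi_v^{n,0}(\phi_v)$ at $s=s_0$ and the local orbital integral computing the density with which $V_0(F_v)$ represents $\beta$, up to a normalizing local factor independent of $\phi_v$; this is exactly Weil's local evaluation of these densities (with its archimedean counterpart). Granting it, the global $\beta$-coefficient of the Eisenstein series equals the product of the local normalizations times the corresponding theta coefficient. The subtle point is that, although each local normalization depends on $\beta$, the product over all $v$ is $\beta$-independent by Weil's product formula for the local indices; this produces a single global constant $c$, built also from the Tamagawa volume of $H_0$, that is the same for every $\beta$. Since all local densities and Tamagawa volumes are positive, $c>0$; and the $\beta$ not represented by $V_0$ give zero on both sides (the Eisenstein coefficient vanishes because some local density does). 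Matching all $\beta$ then yields $A_0^{n,0}(\phi)=c\cdot I_{n,0}(\phi)$ with $c>0$.

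I expect the local density computation to be the main obstacle: identifying the local Whittaker functional of the Weil-representation section with the local representation density, including at the archimedean places, is the technical heart of Weil's local theory, and the most delicate bookkeeping is verifying that the product of the $\beta$-dependent local normalizations collapses to a single $\beta$-independent positive constant once reconciled with the Tamagawa measures.
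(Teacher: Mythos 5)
First, a point of reference: the paper does not prove this theorem at all --- it is imported wholesale from Weil's 1965 paper (the convergent case $r=0$ cited in the introduction as \cite{weil}), so there is no in-paper argument to compare your proposal against. Judged on its own terms, your sketch is a recognizable outline of the classical Siegel/Weil strategy (match Fourier coefficients along the abelian unipotent radical $Z(Y_n)=N(Y_n)$ of the Siegel parabolic, reduce to local representation densities, and absorb the normalizations into a single constant via the product formula and Tamagawa measures), and several of your preliminary observations are sound: compactness of $H_0(F)\backslash H_0(\mathbb{A})$, absolute convergence of $I_{n,0}(\phi)$, and the fact that an automorphic form is determined by its full family of $N(Y_n)$-Fourier coefficients.

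The genuine gap is in the second step. Your factorization of the $\beta$-th Fourier coefficient of $E(g;\Phi^{n,0}(\phi))$ into an Euler product of local Whittaker integrals is valid only for \emph{nonsingular} $\beta$ (rank equal to $n$). But every $\beta$ contributing on the theta side is a Gram matrix of $n$ vectors in $V_0$, hence has rank at most $m_0$; whenever $m_0<n$ --- the typical situation for an anisotropic $V_0$ --- \emph{all} relevant coefficients are singular, and singular Fourier coefficients of Siegel Eisenstein series receive contributions from several Bruhat cells of $P(Y_n)\backslash G_{2n}/P(Y_n)$ and do not collapse to a product of local densities. Closing this gap requires an additional induction on the rank of $\beta$ (Weil's ``descente'' to lower-rank groups), or else an entirely different global argument (comparison of constant terms along all parabolics plus orthogonality of the difference to cusp forms). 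Relatedly, you assert rather than establish that $E(g;\Phi^{n,0}(\phi))$ is holomorphic at $s_0=(m_0-n)d/2$ so that $A_{-1}^{n,0}=0$; when $m_0\leq 2n$ this point lies outside the range of absolute convergence and the holomorphy for Siegel--Weil sections is itself a nontrivial part of the theorem, not a consequence of $r=0$ alone. As written, the proposal therefore proves the identity only when $m_0\geq n$ (and even then modulo the local density identity it defers to Weil), and does not reach the case that actually matters here.
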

\begin{lem}
	\cite[Proposition 7.2]{gan2014siegelweil}
	For $\phi\in S(Y_n^\ast\otimes V_0)(\mathbb{A})=S(y_1^\ast\otimes V_0)(\mathbb{A})\otimes S({Y'}^\ast_{n-1}\otimes V_0)(\mathbb{A})$, we have
	\[I_{n,0}(\phi)_{U_1(Y_1)}|_{\GL(Y_1)(\mathbb{A})\times G_{2n-2}(\mathbb{A}) }=\chi_V\cdot\alpha_E^{m_0d}\boxtimes I_{n-1,0}(\phi(0,-)) \]
	where $I_{n,0}(\phi)_{U_1}$ is the constant term of $I_{n,0}(\phi)$ with respect to the maximal parabolic $Q_1(Y_1)$.
\end{lem}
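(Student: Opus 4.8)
The plan is to compute the constant term directly from the definition of the convergent theta integral and to reduce it to the analogous integral for $G_{2n-2}$, using the anisotropy of $V_0$ in an essential way. Writing $[X]=X(F)\backslash X(\mathbb{A})$, I would first unfold
\[I_{n,0}(\phi)_{U_1(Y_1)}(g)=\frac{1}{\tau(H_0)}\int_{[U_1(Y_1)]}\int_{[H_0]}\Theta_{n,0}(ug,h;\phi)\,dh\,du\]
and interchange the two integrations. Since $V_0$ is anisotropic, $[H_0]$ is compact, $[U_1(Y_1)]$ is compact, and $\Theta_{n,0}(g,-;\phi)$ is rapidly decreasing, so Fubini is justified. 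It then suffices to compute the inner constant term $\int_{[U_1(Y_1)]}\Theta_{n,0}(ug,h;\phi)\,du$ of the theta kernel.

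For this, decompose the summation variable $x\in(Y_n^\ast\otimes V_0)(F)$ along $Y_n^\ast=y_1^\ast\oplus{Y'}_{n-1}^\ast$ as $x=(x_1,x')$ with $x_1\in(y_1^\ast\otimes V_0)(F)$, and use the two-step structure $Z(Y_1)\subset U_1(Y_1)$ of the unipotent radical, with abelian quotient $U_1(Y_1)/Z(Y_1)$. By the explicit formulas for $\omega_{n,0}$ the center $Z(Y_1)$ acts on the theta argument through the additive character determined by the $V_0$-norm $\langle x_1,x_1\rangle$ of the $y_1^\ast$-component, so integrating over the compact quotient $[Z(Y_1)]$ and applying orthogonality of characters forces $\langle x_1,x_1\rangle=0$. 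Because $V_0$ is anisotropic, this is possible only when $x_1=0$. This is the crux of the argument and the one place where anisotropy enters.

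Once the summation is restricted to $x_1=0$, every remaining unipotent direction acts trivially: the phase coming from the abelian quotient $U_1(Y_1)/Z(Y_1)$ and the translation it induces are both proportional to the now-vanishing coordinate $x_1$, so the integral over $[U_1(Y_1)/Z(Y_1)]$ contributes only the normalized volume $1$. The surviving sum over $x'\in({Y'}_{n-1}^\ast\otimes V_0)(F)$ is exactly the theta kernel $\Theta_{n-1,0}(g,h;\phi(0,-))$ for $G_{2n-2}\times H_0$ attached to the restricted Schwartz function $\phi(0,-)$, so the remaining $[H_0]$-integral reproduces $I_{n-1,0}(\phi(0,-))$ on the $G_{2n-2}$-factor. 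Finally, for $a\in\GL(Y_1)(\mathbb{A})\subset\GL(Y_n)(\mathbb{A})$ the formula $\omega_{n,0}(a)\phi(x)=\chi_V(\nu(a))\alpha_E(\nu(a))^{m_0d/2}\phi(a^{-1}x)$, evaluated on the locus $x_1=0$ on which $a$ acts without translation, contributes the scalar on the $\GL(Y_1)$-factor; combined with the normalization built into the definition of $I_{n,0}$ this yields the character $\chi_V\cdot\alpha_E^{m_0d}$, and assembling the two factors gives the claimed formula.

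The reduction and the Fubini step are routine given the compactness supplied by the anisotropy of $V_0$; the main obstacle is the bookkeeping in the middle step. Concretely, one must pin down the action of each graded piece of $U_1(Y_1)$ in the chosen model, verify that the central integration really produces the $V_0$-norm character (so that anisotropy can be invoked to collapse the sum to $x_1=0$), and then track all Weil-representation and modulus normalizations carefully enough to obtain the precise exponent $m_0d$ on the $\GL(Y_1)$-factor. These are exactly the computations carried out in \cite[Proposition 7.2]{gan2014siegelweil} in the case $d=1$, and the same scheme applies here.
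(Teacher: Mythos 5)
The paper itself gives no proof of this lemma --- it is simply quoted from \cite[Proposition 7.2]{gan2014siegelweil}, which treats the case $d=1$ --- so there is no in-text argument to compare against; your proposal is the standard proof of that result and its structure is correct. Fubini is harmless because $[H_0]$ and $[U_1(Y_1)]$ are compact; integrating the theta kernel over the centre $Z(Y_1)$ produces the character $z\mapsto\psi(\tfrac12\langle z x_1,x_1\rangle)$, whose triviality forces $(x_1,x_1)_{V_0}=0$ and hence $x_1=0$ by anisotropy; and on the locus $x_1=0$ the phase contributed by $\Hom({Y'}^*_{n-1},Y_1)$ and the translation contributed by $\Hom(Y'_{n-1},Y_1)$ both degenerate, leaving exactly $\Theta_{n-1,0}(g,h;\phi(0,-))$. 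All of this is fine.

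The one step you have not actually closed is the exponent on the $\GL(Y_1)$-factor, and it is a genuine gap rather than bookkeeping. The Weil-representation formula $\omega_{n,0}(a)\phi(x)=\chi_V(\nu(a))\,\alpha_E(\nu(a))^{m_0d/2}\,\phi(a^{-1}x)$, evaluated on the locus $x_1=0$ where $a^{-1}$ acts without translation, yields the character $\chi_V\cdot\alpha_E^{m_0d/2}$, not $\chi_V\cdot\alpha_E^{m_0d}$. There is no further ``normalization built into the definition of $I_{n,0}$'' that could double this exponent: $I_{n,0}$ only divides by the constant $\tau(H_0)$, which is not a character of $\GL(Y_1)(\mathbb{A})$. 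The value $m_0d/2$ is also what consistency with the rest of the paper demands: the constant term of the Siegel Eisenstein series along $U_1(Y_1)$ carries $\chi_V\alpha_E^{s+nd/2}$ on its leading piece, which at $s=s_0=(m_0-n)d/2$ is precisely $\chi_V\alpha_E^{m_0d/2}$, as it must be if $A_0^{n,0}(\phi)=c\cdot I_{n,0}(\phi)$ is to survive taking constant terms. So either the exponent $m_0d$ in the statement is a typo for $m_0d/2$, or you owe an actual computation producing the missing factor; the sentence in which you assert it currently does neither.
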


\begin{proof}[Proof of Theorem \ref{secondtermsiegelweil}]
	Suppose that we are dealing with the Weil representation of $G_{2n+2}\times H_r$ with $m=n+1$. Then for $\tilde{\phi}\in S(Y_{n+1}^\ast\otimes V_r )(\mathbb{A})$, \cite[Theorem 2]{yamana2013siegel} implies that \[A_0^{n+1,r}(\tilde{\phi})=2B_{-1}^{n+1,r}(\tilde{\phi}). \]
	Let us take the constant term of both sides with respect to the maximal parabolic $Q^{n+1}(Y_1)=L^{n+1}(Y_1)\cdot U^{n+1}(Y_1)$ of $G_{2n+2}$, which gives
	 \[  A^{n+1,r}_0(\tilde{\phi})_{U^{n+1}(Y_1)}  = 2 \cdot
	B^{n+1,r}_{-1}(\tilde{\phi})_{U^{n+1}(Y_1)}, \]
	which is an identity of automorphic forms on $L(Y_1) = \GL(Y_1)
	\times G_{2n}$, where $W_{2n} = Y'_n \oplus {Y'_n}^*$. (Note that the
	superscript $^{n+1}$ in the groups $Q^{n+1}(Y_1)$ etc indicates the
	rank of the ambient group $G_{2n+2}$.)
	
	Let $f_s$ be the standard section of
	\[I_r^n(s,\chi_V)=Ind_{Q^{n+1}(Y_r)(\mathbb{A})}^{G_{2n+2}(\mathbb{A})}\chi_V\alpha_E^s\boxtimes\Theta_{n+1-r,0}(V_0). \]
	Let $E^{(n+1,r)}(s,f_s)(g)$ be the associated Eisenstein series, i.e.
	\[E^{(n+1,r)}(s,f_s)(g)=\sum_{\gamma\in Q^{n+1}(Y_r)(F)\backslash G_{2n+2}(F) }f_s(\gamma g)  \]
	for $g\in G_{2n+2}(\mathbb{A})$ and $Re(s)$ sufficiently large. Note that 
	\[\mathcal{E}^{n,r}(s,\phi)=E^{(n,r)}(s,f^{n,r}(s,\pi_{K_{H_r}}(\phi))) \]
	and $A_0^{n+1,r}(\tilde{\phi})_{U(Y_1)}=Val_{s=0}E^{(n+1,n+1)}(s,\Phi^{n+1,r}(\tilde{\phi}))_{U(Y_1)}$.
	So we are interested in computing the constant term $E^{(n+1,r)}(s,f_s)_{U^{n+1}(Y_1)}$. 

	Let us choose the double coset representatives $1,\omega^+$ and $\omega^-$ for the double coset space $Q^{n+1}(Y_r)\backslash G_{2n+2}/Q^{n+1}(Y_1)$, where
	\[\omega^+=\begin{pmatrix}
	J_{r+1}&0\\0&J_{r+1}
	\end{pmatrix} \]
	with $J_{r+1}=\begin{pmatrix}
	0&0&1&0\\0&\mathbf{1}_{r-1}&0&0\\1&0&0&0\\0&0&0&\mathbf{1}_{n-r}
	\end{pmatrix}$
	and \[\omega^-=\begin{pmatrix}
	0&0&1\\0&\mathbf{1}_{2n}&0\\-1&0&0
	\end{pmatrix}. \]
	Associated to the Weyl group element $\omega=\omega^+$ or $\omega^-$ is the standard intertwining operator $M(\omega,s)$:
	\[M(\omega,s)(f_s)(g)= \int_{(U^{n+1}(Y_1)(F)  \cap w Q^{n+1}(Y_r)(F) w^{-1})\backslash U^{n+1}(Y_1)(\mathbb{A})}
	f_s(w^{-1} u  g) \, du. \]
	By the same computation as in \cite[Lemma 8.2]{gan2014siegelweil}, as the automorphic forms on $L^{n+1}(Y_1)=\GL_1(Y_1)\times G_{2n}$,
	\begin{align*}
	E^{(n+1,r)}(s,f_s)_{U^{n+1}(Y_1)}= &\chi_V\alpha_E^{s+(n+1)d-rd/2}E^{(n,r-1)}(s+d/2,f_s|_{G_{2n}})+\chi_V\alpha_E^{md/2}E^{(n,r)}(s,M(\omega^+,s)(f_s)|_{G_{2n}})\\
	&+\chi_V\alpha_E^{-s+nd+d-rd/2}E^{(n,r-1)}(s-d/2,M(\omega^-,s)(f_s)|_{G_{2n}})
	\end{align*}
	and  $E^{(n+1,n+1)}(s,f)_{U^{n+1}(Y_1)}$
	\[=\chi_V\alpha_E^{s+(n+1)d/2}E^{(n,n)}(s+d/2,f|_{G_{2n}})+
	\chi_V\alpha_E^{-s+(n+1)d/2}E^{(n,n)}(s-d/2,M(\omega^-,s)(f)|_{G_{2n}}) \]
	for $f\in I_{n+1}^{n+1}(s,\chi_V)$.
	
	 Choose once and for all $\phi_1\in \mathcal{S}(Y_1^* \otimes
	V_r)(\mathbb{A})$ satisfying:
	
	\begin{itemize}
		\item $\phi_1(0) = 1$;
		\item $\phi_1$ is $K_{H_r}$-invariant, so that $\pi_{K_{H_r}} \phi_1 = \phi_1$. 
	\end{itemize}
	
	Let $Y_n' = \langle y_2, ...,y_{n+1} \rangle$ so that ${Y'_n}^* = \langle y_2^*,...,y_{n+1}^* \rangle$.
	For any $\phi \in \mathcal{S}({Y_n'}^* \otimes V_r)(\mathbb{A})$, we set
	\[  \tilde{\phi} := \phi_1 \otimes \phi \in \mathcal{S}(Y_{n+1}^*
	\otimes V_r)(\mathbb{A}). \]
	Then 
	\[  \pi_{K_{H_r}}(\tilde{\phi}) = \phi_1 \otimes \pi_{K_{H_r}}
	\phi. \]
	Note that the group $G_{2n}$ acts trivially on $\phi_1$, i.e. for
	$g\in G_{2n}(\mathbb{A})$,
	$$\omega_{n+1,r}(g)\tilde{\phi}=\phi_1\otimes\omega_{n,r}(g)\phi.$$ 
	\begin{enumerate}[(i)]
		\item We focus on the second term identity first. Observe that
		for $g\in G_{2n}(\mathbb{A})$, 
		\[\Phi^{n+1,r}(\tilde{\phi})(g)=\phi_1(0)\cdot\omega_{n,r}(g)\phi(0)=\Phi^{n,r}(\phi)(g). \]
		Thus,
		\[E^{(n,n)}(s+d/2,\Phi^{n+1,r}(\tilde{\phi})|_{G_{2n}})=E^{(n,n)}(s+d/2,\Phi^{n,r}(\phi)) \]
		Note that the functional equation implies that
		\[E^{(n,n)}(s-d/2,M(\omega^-,s)(\Phi^{n+1,r}(\tilde{\phi})|_{G_{2n}})=E^{(n,n)}(d/2-s,M_n(s-d/2,\chi_V)(M(\omega^-,s)(\Phi^{n+1,r}(\tilde{\phi}))|_{G_{2n}})). \]
	where $M_n(s,\chi_V)$ is the normalized intertwining operator for the Siegel principal series. By the result of Kudla-Rallis in \cite[Lemma 1.2.2]{kudlarallis}, 
		\[M_n(s-d/2,\chi_V)M(\omega^-,s)=M_{n+1}(s,\chi_V) \]
		which is holomorphic at $s=0$.
		Moreover, $M_{n+1}(0,\chi_V)\Phi^{n+1,r}\tilde{\phi}=\Phi^{n+1,r}(\tilde{\phi})$. So
	by a similar computation appearing in \cite[\S9.2]{gan2014siegelweil}, one has
		\[A^{n+1,r}(\tilde{\phi})_{U^{n+1}(Y_1)}= 2A_0^{n,r}(\phi)\pmod{Im(A_{-1}^{n,r})} \]
		as the automorphic forms on $ G_{2n}$.
		
		Since $$B_{-1}^{n+1,r}(\tilde{\phi})=Res_{s=\rho_{H_r}}\mathcal{E}^{n+1,r}(s,\tilde{\phi})=Res_{s=\rho_{H_r}}E^{(n+1,r)}(s,f^{n+1,r}(s,\pi_{K_{H_r}}(\tilde{\phi}))),$$ $B_{-1}^{n+1,r}(\tilde{\phi})_{U^{n+1}(Y_1)}$ is the residue at $s=\rho_{H_r}=(m-r)d/2$ of the function
		\begin{align*}\chi_V\alpha_E^{s+(n+1)d-rd/2}E^{(n,r-1)}(s+d/2,f^{n+1,r}(s,\pi_{K_{H_r}}(\tilde{\phi}))|_{G_{2n}})+\chi_V\alpha_E^{md/2}E^{(n,r)}(s,M(\omega^+,s)(f^{n+1,r}(s,\pi_{K_{H_r}}(\tilde{\phi})))|_{G_{2n}})\\
		+\chi_V\alpha_E^{-s+nd+d-rd/2}E^{(n,r-1)}(s-d/2,M(\omega^-,s)(f^{n+1,r}(s,\pi_{K_{H_r}}(\tilde{\phi})))|_{G_{2n}}). \end{align*}
		Note that $m=n+1$, so that $r'=r-1$. Then Proposition \ref{functionf(r)} implies
\[f^{n+1,r}(s,\pi_{K_{H_r}}(\tilde{\phi}))|_{G_{2n}}=\alpha_r Z_1(-s-\rho_{H_r},\phi_1) f^{n,r-1}(s+d/2,Ik^{n,r}(\pi_{K_{H_r}}(\phi))). \]
We will mainly concern the $\chi_V\alpha_E^{md/2}$-part of the residue at $s=\rho_{H_r}$ of
\[E^{(n+1,r)}(s,f^{n+1,r}(s,\pi_{K_{H_r}}(\tilde{\phi})))_{U^{n+1}(Y_1)}. \]
 Due to \cite[Lemma 9.1]{gan2014siegelweil},  $E^{(n,r-1)}(s+d/2,f^{n+1,r}(s,\pi_{K_{H_r}}(\tilde{\phi}))|_{G_{2n}})$ is holomorphic at $s=\rho_{H_r}$. Thanks to \cite[Proposition 9.2]{gan2014siegelweil},
\[M(\omega^+,s)(f^{n+1,r}(s,\pi_{K_{H_r}}(\tilde{\phi}))|_{G_{2n}})=f^{n,r}(s,\pi_{K_{H_r}}(\phi)) \]
which implies that \[E^{(n,r)}(s,M(\omega^+,s)(f^{n+1,r}(s,\pi_{K_{H_r}}(\tilde{\phi}))|_{G_{2n}}))=\mathcal{E}^{n,r}(s,\phi).  \]
It has a residue $B_{-1}^{n,r}(\phi)$ at $s=\rho_{H_r}$.

For the last term, the functional equation implies that
\begin{align*}
&E^{(n,r-1)}(s-d/2,M(\omega^-,s)(f^{n+1,r}(s,\pi_{K_{H_r}}(\tilde{\phi}))|_{G_{2n}}))
\\=&E^{(n,r-1)}(d/2-s,M_n(\omega_{r-1},s-d/2)(M(\omega^-,s)f^{n+1,r}(s,\pi_{K_{H_r}}(\tilde{\phi}))|_{G_{2n}}))\\
=&E^{(n,r-1)}(d/2-s,M_{n+1}(\omega_r,s)f^{n+1,r}(s,\pi_{K_{H_r}}(\tilde{\phi}))|_{G_{2n}})\\
=&c_r(s)\cdot E^{(n,r-1)}(d/2-s,f^{n+1,s}(-s,\pi_{K_{H_r}}(\tilde{\phi}))|_{G_{2n}})\\
=&c_r(s)\cdot \alpha_r Z_1(s-\rho_{H_r},\phi_1)\cdot E^{(n,r-1)}(d/2-s,f^{n,r-1}(-s+d/2,Ik^{n,r}(\pi_{K_{H_r}}(\phi))))\\
=&c_r(s)\cdot\alpha_r Z_1(s-\rho_{H_r},\phi_1)\mathcal{E}^{n,r-1}(d/2-s,Ik^{n,r}(\pi_{K_{H_r}}(\phi)))\\
=&\frac{c_r(s)}{c_{r-1}(s-d/2)}\alpha_r Z_1(s-\rho_{H_r},\phi_1)\mathcal{E}^{n,r-1}(s-d/2,Ik^{n,r}(\pi_{K_{H_r}}(\phi)))
\end{align*}
due to \cite[Lemma 1.2.2]{kudlarallis} and \cite[Remark 9.4]{gan2014siegelweil}, where $$\omega_{r-1}=\begin{pmatrix}
0&0&\mathbf{1}_{r-1}\\0&\mathbf{1}_{2n+2-2r}&0\\-\mathbf{1}_{r-1}&0&0
\end{pmatrix},\quad \omega_r=\begin{pmatrix}
0&0&\mathbf{1}_r\\0&\mathbf{1}_{2n+2-2r}&0\\-\mathbf{1}_r&0&0
\end{pmatrix}$$ and $c_r(s)$ is the meromorphic function satisfying $$E_{H_r}(s,-)=c_r(s)E_{H_r}(-s,-).$$
Note that
\begin{itemize}
	\item $c_r(s)$ has a simple pole at $s=\rho_{H_r}=\rho_{H_{r-1}}+d/2$, then
	$$\frac{c_r(s)}{c_{r-1}(s-d/2)}$$ is holomorphic and nonzero at $s=\rho_{H_r}$ when $r>1$;
	\item the Tate zeta integral $Z_1(s-\rho_{H_r},\phi_1)$ has a simple pole at $s=\rho_{H_r}$;
	\item $$\mathcal{E}^{n,r-1}(s-d/2,Ik^{n,r}(\pi_{K_{H_r}}\phi))=\sum_{i\geq-1}B_{i}^{n,r-1}(Ik^{n,r}\pi_{K_{H_r}}\phi)(s-\rho_{H_{r-1}}-d/2)^i $$ and $B_{-1}^{n,r-1}=0$ if $r=1$.
\end{itemize}
Taking the residue at $s=\rho_{H_r}=\rho_{H_{r-1}}+d/2$, we have
\begin{align*}
A_0^{n,r}(\phi)-B_{-1}^{n,r}(\phi)=a_1B_{-1}^{n,r'}(Ik^{n,r}\pi_{K_{H_r}}\phi)+a_2 B_0^{n,r'}(Ik^{n,r}(\pi_{K_{H_r}}\phi))\pmod{Im A_{-1}^{n,r}}
\end{align*} for some constants $a_1,a_2$.
By the first term identity in the first term range, $$B_{-1}^{n,r'}(Ik^{n,r}\pi_{K_{H_r}}\phi)\in Im(A_{-1}^{n,r}).$$ Then we get the desired identity when $m=n+1$. If $r=1$, then $r'=0$ and
\[B_{-1}^{n,0}(Ik^{n,1}\pi_{K_{H_r}}\phi)\in Im(A_{-1}^{n,1}). \]
		\item Let us focus on the first term identity now.
		In fact,  the last term $$E^{(n,r-1)}(s-d/2,M(\omega^-,s)f^{n+1,r}(s,\pi_{K_{H_r}}\tilde{\phi})|_{G_{2n}})$$
		has a pole of second order at $s=\rho_{H_r}$. It has a leading term
		\begin{equation}B_{-1}^{n,r-1}(Ik^{n,r}\pi_{K_{H_r}}(\phi))\cdot\alpha_r Val_{s=\rho_{H_r}}\frac{c_r(s)}{c_{r-1}(s-d/2)} \cdot Res_{s=\rho_{H_r}}Z_1(s-\rho_{H_r},\phi_1) \label{leadingterm} \end{equation}
		when $r>1$ and $Res_{s=\rho_{H_r}}Z_1(s-\rho_{H_r},\phi_1)$ only depends on the division algebra $D$.
		The leading term (\ref{leadingterm}) must be cancelled with the leading term $B_{-2}^{n,r}(\phi)$ of $\mathcal{E}^{n,r}(s,\phi)$. Moreover
		$$B_{-1}^{n,r-1}(Ik^{n,r}\pi_{K_{H_r}}\phi)=\kappa_{r,r'}\cdot A_{-1}^{n,r}(\phi)$$ by the first term identity in the first term range.
	Hence there exists a constant $c>0$ such that $A_{-1}^{n,r}(\phi)=c \cdot B_{-2}^{n,r}(\phi)$.
	If $r=1$, then $B_{-1}^{n,r-1}=0$ and $\frac{c_r(s)}{c_{r-1}(s-d/2)}$ has a pole at $s=\rho_{H_r}$. This finishes the proof when $m=n+1$.
	\end{enumerate}
	In general, if $n+1< m\leq n+r$, we may assume that 
	\[A^{n+1,r}_{-1}(\tilde{\phi})=c\cdot B_{-2}^{n+1,r}(\tilde{\phi}) \]
	and 
	\begin{equation}
\label{secondterminduction}	
	A_0^{n+1,r}(\tilde{\phi})= B_{-1}^{n+1,r}(\tilde{\phi})+c'\cdot B_{0}^{n+1,r'}(Ik^{n+1,r}(\pi_{K_{H_r}}\tilde{\phi}))+A_{-1}^{n+1,r}(\varphi) \end{equation}
	for some $\varphi\in S(Y_{n+1}^\ast\otimes V_r)(\mathbb{A})$, where $m_0+r+r'=n+1$ and $r'\geq1$.
	
	We still consider the constant term along $U^{n+1}(Y_1)$ and get
	\[A_{-1}^{n+1,r}(\tilde{\phi})_{U^{n+1}(Y_1)}=c\cdot B_{-2}^{n+1,r}(\tilde{\phi})_{U^{n+1}(Y_1)}. \]
	We concern the terms in
	\[E^{(n+1,r)}(s,f^{n+1,r}(s,\pi_{K_{H_r}}\tilde{\phi}))_{U^{n+1}(Y_1)} \]
	where $\GL_1(Y_1)\subset L(Y_1)$ acts by the character $\chi\cdot \alpha_E^{md/2}$. Then we have
	\[\chi_V\alpha_E^{md/2}\cdot E^{(n,r)}(s,M(\omega^+,s)(f^{n+1,r}(s,\pi_{K_{H_r}}\tilde{\phi}))|_{G_{2n}})=\chi_V\alpha_E^{md/2}\mathcal{E}^{n,r}(s,\phi) \]
	and so the $\chi_V\alpha_E^{md/2}$-part of $B_{-2}^{n+1,r}(\tilde{\phi})_{U^{n+1}(Y_1)}$ equals to
	$B_{-2}^{n,r}(\phi) $. On the other hand, the $\chi_V\alpha_E^{md/2}$-part of $A_{-1}^{n+1,r}(\tilde{\phi})_{U^{n+1}(Y_1)}$ is the residue at $s=(m-1-n)d/2$ of
\[E^{(n,n)}(s+d/2,\Phi^{n+1,r}(\tilde{\phi})|_{G_{2n}})=E^{(n,n)}(s+d/2,\Phi^{n,r}(\phi)), \]
which is nothing but $A_{-1}^{n,r}(\phi)$. Thus there exists a constant $c$ such that
\[A_{-1}^{n,r}(\phi)=(\mbox{the }\chi_V\alpha_E^{md/2}\mbox{-part of } A_{-1}^{n+1,r}(\tilde{\phi})_{U^{n+1}(Y_1)})=c\cdot B_{-2}^{n,r}(\phi). \]
Observe that
\[A_{-1}^{n+1,r}(\tilde{\phi})_{U^{n+1}(Y_1)}=Res_{s=(m-n-1)d/2}E^{(n,n)}(s,\Phi^{n+1,r}(\tilde{\phi}))_{U^{n+1}(Y_1)} \]
and so the $\chi_V\alpha_E^{md/2}$-part of $A_{-1}^{n+1,r}(\tilde{\phi})_{U^{n+1}(Y_1)}$ lies in $Im A_{-1}^{n,r}$.
Similarly, we compute the constant term along $U^{n+1}(Y_1)$ of both sides of (\ref{secondterminduction}) and then extract the terms with $\GL(Y_1)$ acting via $\chi\cdot\alpha_E^{md/2}$. Therefore,
\[A_0^{n,r}(\phi)-B_{-1}^{n,r}(\phi)=c'\cdot (\mbox{the }\chi_V\alpha_E^{md/2}\mbox{-part of }B_0^{n+1,r'}(Ik^{n+1,r}\pi_{K_{H_r}}\tilde{\phi})_{U^{n+1}(Y_1)})\pmod{ Im A_{-1}^{n,r}}. \]
By the definition, $B_0^{n+1,r'}(Ik^{n+1,r}\pi_{K_{H_r}}\tilde{\phi})_{U^{n+1}(Y_1)}$ is the value taking at $s=\rho_{H_{r'}}$ of the function
	\begin{align*}\chi_V\alpha_E^{s+(n+1)d-r'd/2}E^{(n,r'-1)}(s+d/2,\cdots)+\chi_V\alpha_E^{m'd/2}E^{(n,r')}(s,M(\omega^+,s)(\cdots))\\
+\chi_V\alpha_E^{-s+nd+d-r'd/2}E^{(n,r'-1)}(s-d/2,M(\omega^-,s)(\cdots)). \end{align*}

The remaining part of the proof is to show that there exists a nonzero constant $c'$ such that
\[Val_{s=\rho_{H_{r'}}}E^{(n,r'-1)}(s-d/2,M(\omega^-,s)f^{n+1,r'}(s,Ik^{n+1,r}(\pi_{K_{H_r}}\tilde{\phi}))|_{G_{2n}})=c'B_0^{n,r'-1}(Ik^{n,r}\pi_{K_{H_r}}\phi)\pmod{Im A_{-1}^{n,r}} \]
since $r'-1+r+m_0=n$.
Note that
\[Ik^{n+1,r}(\pi_{K_{H_r}}\tilde{\phi})=Ik^{1,r,r'}(\phi_1)\otimes Ik^{n,r,r'}(\pi_{K_{H_r}}\phi). \]
Thus
\begin{align*}
&E^{(n,r'-1)}(s-d/2,M(\omega^-,s)f^{n+1,r'}(s,Ik^{n+1,r}(\pi_{K_{H_r}}\tilde{\phi}))|_{G_{2n}})\\
=&c_{r'}(s)E^{(n,r'-1)}(d/2-s,f^{n+1,r'}(-s,Ik^{n+1,r}\pi_{K_{H_r}}\tilde{\phi})|_{G_{2n}})\\
=&c_{r'}(s)\alpha_{r'} Z_1(s-(n+1-r')d+\rho_{H_{r'}},Ik^{1,r,r'}\phi_1)\\
&\times E^{n,r'-1}(d/2-s, f^{n,r'-1}(-s+d/2,Ik^{n,r',r'-1}\circ Ik^{n,r,r'}\pi_{K_{H_r}}\phi))\\
=&c_{r'}(s)\alpha_{r'} Z_1(s-(n+1-r')d+\rho_{H_{r'}},Ik^{1,r,r'}\phi_1)\\
&\times E^{n,r'-1}(d/2-s,f^{n,r'-1}(-s+d/2,Ik^{n,r,r'-1}\pi_{K_{H_r}}\phi))\\
=&c_{r'}(s)\alpha_{r'} Z_1(s-(n+1-r')d+\rho_{H_{r'}},Ik^{1,r,r'}\phi_1)\cdot \mathcal{E}^{n,r'-1}(d/2-s,Ik^{n,r}\pi_{K_{H_r}}\phi)\\
=&\frac{c_{r'}(s)}{c_{r'-1}(s-d/2)}\alpha_{r'} Z_1(s-(n+1-r')d+\rho_{H_{r'}},Ik^{1,r,r'}\phi_1)\mathcal{E}^{n,r'-1}(s-d/2,Ik^{n,r}\pi_{K_{H_r}}\phi)
\end{align*}
where $V_{r'-1}$ and $V_{r}$ are complementary with respect to $W_{2n}$ and so $Ik^{n,r,r'-1}=Ik^{n,r}$.

If $r'>1$, then both $\frac{c_{r'}(s)}{c_{r'-1}(s-d/2)}$ and $Z_1(s-(n+1-r')d+\rho_{H_{r'}},Ik^{1,r,r'}\phi_1)$ are holomorphic at $s=\rho_{H_{r'}}$. Then 
\[ Val_{s=\rho_{H_{r'}}}E^{(n,r'-1)}(s-d/2,M(\omega^-,s)f^{n+1,r'}(s,Ik^{n+1,r}(\pi_{K_{H_r}}\tilde{\phi}))|_{G_{2n}})=c'B_0^{n,r'-1}(Ik^{n,r}\pi_{K_{H_r}}\phi)\]
for some constant $c'$. If $r'=1$, then $m=r+n$ and $B_0^{n,0}(Ik^{n,r}\pi_{K_{H_r}}\phi)\in Im A_{-1}^{n,r}$.
\end{proof}

\section{Applications to the Rallis inner product formula}
In this section, we use the the regularized Siegel-Weil formula to derive the Rallis inner product formula and prove the non-vanishing theorem of global thetal lifts. Yamana \cite{yamana2014theta} has studied the relation between the nonvanishing of theta lift
and the analytic property of its $L$-fucntion in the first term range, i.e. $m\leq n$. We will focus on the second term range.

Suppose that $E_v=F_v\oplus F_v$ for all archimedean places $v|\infty$. Let $W$ be a skew-Hermitian $D$-vector space and $W_{2n}=W\oplus W^-$, where $W^-$ is the space $W$ with the form scaled by $-1$. Let $V_r$ be the Hermitian $D$-vector space with Witt index $r$ as defined before. Suppose that $W\otimes V_r$ has a complete polarization
\[W\otimes V_r=\mathcal{X}\oplus\mathcal{Y}. \]
Let $\omega_\psi$ be the Weil representation of $U(W)\times H_r$ associated to $W\otimes V_r$. Given a function $\phi\in S(\mathcal{X})(\mathbb{A})$, one can define
\[\theta(\phi)(g,h)=\sum_{x\in\mathcal{X}(F)}\omega_\psi(g,h) \phi(x) \]
for $(g,h)\in U(W)(\mathbb{A})\times H_r(\mathbb{A})$. For a cuspidal representation $\pi$ of $U(W)$, we consider its global theta lift $\Theta_{n,r}(\pi)$ to $H_r$, so that $\Theta_{n,r}(\pi)$ is hte automorphic subrepresentation of $H_r$ spanned by the automorphic forms
\[\theta_{n,r}(\phi,f)(h)=\int_{U(W)(F)\backslash U(W)(\mathbb{A}) }\theta(\phi)(g,h)\cdot \overline{f(g)}dg  \]
for $f\in\pi$.

We will use the doubling see-saw diagram 
\[\xymatrix{G_{2n}\ar@{-}[d]\ar@{-}[rd]& H_r\times H_r\ar@{-}[d]\\ U(W)\times U(W^-)\ar@{-}[ru] & H_r^{\triangle} } \]
to study the inner product
\[\langle \theta_{n,r}(\phi_1,f_1),\theta_{n,r}(\phi_2,f_2) \rangle \]
for $\phi_i\in\omega_\psi$ and $f_i\in\pi$. Indeed, we choose a Witt decomposition of $W_{2n}$ to be
\[W_{2n}=Y_n\oplus Y_n^\ast \]
with $Y_n=W^\triangle=\{(y,y):y\in W \}$ and $Y_n^\ast=\{(y,-y):y\in W \}$. The Weil representation $\omega_{n,r}$ of $G_{2n}\times H_r$ can be realized on $S(Y_n^\ast\otimes V_r)$ such that $H_{r}^\triangle$ acts by
\[\omega_{n,r}(h)\phi(x)=\phi(h^{-1}\cdot x) \]
for $h\in H_r^\triangle$.
Moreover,
\[\omega_{n,r}|_{U(W)\times U(W)}\cong \omega_\psi\otimes(\omega_\psi^\vee\cdot\chi_V)|_{U(W)\times U(W) }. \]
There exists an isomorphism
\[\delta:\omega_\psi\otimes(\omega_\psi^\vee\cdot\chi_V)\longrightarrow \omega_{n,r} \]
such that $\delta(\phi_1\otimes\overline{\phi_2})(0)=\langle \phi_1,\phi_2 \rangle$
for $\phi_i\in S(\mathcal{X})(\mathcal{A})$.

\begin{thm}\label{fininaltheoremRallis}
	Assume that $1+n\leq m\leq n+r$ and $W$ is a skew-Hermitian $D$-vector space of dimension $n$. Let $\pi$ be an irreducible cuspidal representation of $U(W)$ and consider its global theta lift
	$\Theta_{n,r}(\pi) $ to $U(V_r)=H_r$. Assume that $\Theta_{n,j}(\pi)=0$ for $j< r$, so that $\Theta_{n,r}(\pi) $ is cuspidal. Then $\Theta_{n,r}(\pi)$ is nonzero if and only if 
	\begin{enumerate}[(i)]
		\item for all places $v$, $\Theta_{n,r}(\pi_v)\neq0$ and
		\item $L(s_0+1/2,\pi\times\chi_V)\neq0$ where $s_0=(m-n)d/2$. 
	\end{enumerate}
\end{thm}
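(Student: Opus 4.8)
The plan is to establish a Rallis inner product formula for $\theta_{n,r}(\phi,f)$ and then read off the nonvanishing criterion. Since $\Theta_{n,j}(\pi)=0$ for $j<r$, the lift $\Theta_{n,r}(\pi)$ is cuspidal, so the Petersson inner product $\langle\theta_{n,r}(\phi_1,f_1),\theta_{n,r}(\phi_2,f_2)\rangle$ over $H_r(F)\backslash H_r(\mathbb{A})$ converges absolutely. First I would apply the doubling see-saw displayed above: moving the $H_r$-integration inside the two $U(W)$-integrations that define the lifts, the inner product becomes
\[ \int_{(U(W)\times U(W^-))(F)\backslash(U(W)\times U(W^-))(\mathbb{A})} I_{n,r}(\phi)(g_1,g_2)\,\overline{f_1(g_1)}\,f_2(g_2)\,dg_1\,dg_2, \]
where $\phi=\delta(\phi_1\otimes\overline{\phi_2})$ and $I_{n,r}(\phi)$ is the (divergent) theta integral on $G_{2n}$ restricted to $U(W)\times U(W^-)$. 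Because this integral diverges in the second term range, I would pass to the Hecke-regularized object $\mathcal{E}^{n,r}(s,\phi)$ and to the relevant Laurent coefficients $B^{n,r}_i(\phi)$ at $s=\rho_{H_r}$, the cuspidality of the lift ensuring that the regularization recovers the inner product up to the controlled terms below.

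Next I would invoke the Siegel--Weil formula, Theorem \ref{secondtermsiegelweil}, to move from the theta side to the Eisenstein side. The second term identity
\[ A^{n,r}_0(\phi)=B^{n,r}_{-1}(\phi)+c'\cdot B^{n,r'}_0(Ik^{n,r}(\pi_{K_{H_r}}\phi))\pmod{\mathrm{Im}\,A^{n,r}_{-1}} \]
converts the regularized theta integral into Laurent coefficients of the Siegel Eisenstein series $E(g;\Phi^{n,r}(\phi))$. Pairing with the cusp forms $f_1\otimes\overline{f_2}$ of $\pi\boxtimes\overline{\pi}$ produces the Piatetski-Shapiro--Rallis doubling zeta integral $Z(s,f_1,f_2,f)=\int E(s;(g_1,g_2);f)\,\overline{f_1(g_1)}\,f_2(g_2)\,d(g_1,g_2)$, which unfolds to a product of local zeta integrals times $L(s+\tfrac12,\pi\times\chi_V)/b(s,\chi_V)$. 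By Lemma \ref{siegeleisenstein} the series $E(g;\Phi^{n,r}(\phi))$ has a simple pole at $s_0$, and this dictates exactly which Laurent coefficient of $Z$ at $s_0$ is to be extracted.

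The crucial step is to dispose of the two extra contributions. The term $B^{n,r'}_0(Ik^{n,r}(\pi_{K_{H_r}}\phi))$ corresponds, upon running the see-saw in reverse, to the theta lift of $\pi$ to the \emph{smaller} group $H_{r'}$; here $\dim V_{r'}=2n-m<n<m$ forces $r'<r$, so this lift vanishes by the first-occurrence hypothesis $\Theta_{n,j}(\pi)=0$ for $j<r$, and the second term drops out entirely --- this is the decisive use of the first-occurrence assumption and the reason the second term identity (rather than just the first) is needed. For the contribution of $\mathrm{Im}\,A^{n,r}_{-1}$, I would use that by Proposition \ref{secondterm} the coefficient $A^{n,r}_{-1}$ is $G_{2n}(\mathbb{A})$-equivariant with image in the coherent part $\bigoplus_{\mathcal{C}}\Pi(\mathcal{C},\chi_V)$; its pairing against the cuspidal $\pi$ is the residue of $Z(s)$ at $s_0$, which I would show is governed by the behaviour of $L(s+\tfrac12,\pi\times\chi_V)/b(s,\chi_V)$ there and does not disturb the leading identity. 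After these cancellations the inner product equals a nonzero constant times $\prod_v Z^*_v(s_0)\cdot L(s_0+\tfrac12,\pi\times\chi_V)$, with $Z^*_v$ the normalized local zeta integrals.

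Finally, the nonvanishing criterion follows by inspection. The left-hand side is nonzero for some choice of data if and only if the global value $L(s_0+\tfrac12,\pi\times\chi_V)$ is nonzero and each local factor $Z^*_v$ can be made nonzero; since the data form a restricted tensor product, the latter amounts to a purely local condition at each place, which by the local doubling theory (equivalently the local Siegel--Weil quotient $R_n(V_r(F_v))$) is precisely $\Theta_{n,r}(\pi_v)\neq0$. I expect the main obstacle to be the bookkeeping of pole orders at $s_0$: the regularized integral $\mathcal{E}^{n,r}$ has a pole of order up to two whereas $E(g;\Phi^{n,r}(\phi))$ has only a simple pole, so correctly matching Laurent coefficients through the second term identity, and rigorously showing that the $\mathrm{Im}\,A^{n,r}_{-1}$ contribution does not interfere when paired against cusp forms, will require the most care; establishing the local nonvanishing equivalence over the division algebra $D$ also needs the local doubling zeta integrals to be developed in this generality.
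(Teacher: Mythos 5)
Your overall architecture matches the paper's proof: insert the auxiliary Eisenstein series $E_{H_r}(s,h)$ into the (convergent, by cuspidality of $\Theta_{n,r}(\pi)$) Petersson inner product, run the see-saw to land on the regularized integral $\mathcal{E}^{n,r}(s,\delta(\phi_1\otimes\overline{\phi_2}))$ paired against $f_1\otimes\overline{f_2}$, extract the $B^{n,r}_{-1}$-pairing from the constant residue of $E_{H_r}$ at $s=\rho_{H_r}$, apply the second term identity of Theorem \ref{secondtermsiegelweil} to replace it by the $A^{n,r}_0$-pairing, and identify the latter with $Val_{s=s_0}$ of the doubling zeta integral, whose Euler product yields $L(s+1/2,\pi\times\chi_V)$ times normalized local factors. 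Your disposal of the $B^{n,r'}_0(Ik^{n,r}\pi_{K_{H_r}}\phi)$ term is exactly the paper's: it is a Laurent coefficient of a regularized theta integral on the smaller group $H_{r'}$ with $r'<r$, so its pairing against $f_1\otimes\overline{f_2}$ vanishes by the first-occurrence hypothesis.

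The genuine gap is in your treatment of the $\mathrm{Im}\,A^{n,r}_{-1}$ ambiguity. You propose to show that the pairing of $A^{n,r}_{-1}(\varphi)$ against $f_1\otimes\overline{f_2}$, i.e.\ $\mathrm{Res}_{s=s_0}Z(s,\Phi^{n,r}(\varphi);f_1,f_2)$, ``is governed by $L(s+1/2,\pi\times\chi_V)/b(s,\chi_V)$ and does not disturb the leading identity.'' But this residue is not a priori zero: it vanishes only if $L(s+1/2,\pi\times\chi_V)$ is holomorphic at $s=s_0$ (together with holomorphy of the normalized local factors there), and in this framework the holomorphy of the $L$-function at $s_0+1/2$ is itself a consequence of the vanishing of precisely this residue term --- so as written your argument is circular, or at best relies on an unnamed external input on the location of poles of the doubling $L$-function. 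The paper closes this gap with the first term identity in the \emph{first} term range, $A^{n,r}_{-1}(\varphi)=\kappa_{r,r'}\,B^{n,r'}_{-1}(Ik^{n,r}\pi_{K_{H_r}}\varphi)$ (recalled at the start of \S4 from \cite{ichino2004mathz} and \cite{yamana2013siegel}): the image of $A^{n,r}_{-1}$ is thus again realized by regularized theta integrals on the complementary space $V_{r'}$, and its pairing against the cusp forms vanishes by the same first-occurrence mechanism that kills the $B^{n,r'}_0$ term. You should replace your $L$-function argument by this identity; everything else in your outline then goes through as in the paper (modulo the standing hypothesis, which you do not mention, that $E_v$ splits at all archimedean places, needed for the local equivalence between nonvanishing of $Z^*_v$ at $s_0$ and $\Theta_{n,r}(\pi_v)\neq 0$).
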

\begin{proof}
	Let us consider the integral
	\begin{equation}\label{innerprod}
		\int_{H_r(F)\backslash H_r(\mathbb{A}) }\theta_{n,r}(\phi_1,f_1)(h)\overline{\theta_{n,r}(\phi_2,f_2)(h)}E_{H_r}(s,h)dh. 
	\end{equation}
 By the same computation appearing in \cite{kudla1994annals}, the integral (\ref{innerprod}) equals to
 \[\int_{[U(W)\times U(W)]}f_1(g)\overline{f_2(g)}\mathcal{E}^{n,r}(s,\delta(\phi_1\otimes\overline{\phi_2}))((g_1,g_2))\chi_V^{-1}(\nu(g_1))dg_1dg_2 . \]
 Here \[[U(W)\times U(W) ]=(U(W)\times U(W))(F)\backslash (U(W)\times U(W) )(\mathbb{A}) . \]
 The Eisenstein series $E_{H_r}(s,h)$ has a simple pole at $s=\rho_{H_r}$ with a constant residue. Thus
 \[\langle \theta_{n,r}(\phi_1,f_1),\theta_{n,r}(\phi_2,f_2) \rangle=c\cdot\int_{[U(W)\times U(W)]}f_1(g_1)\overline{f_2(g_2)} B_{-1}^{n,r}(\delta(\phi_1\otimes \overline{\phi_2}))((g_1,g_2))\chi_V^{-1}(\nu(g_2)) dg_1dg_2 \]
 for a nonzero constant $c$. Note that 
 \begin{align*}
 &\int_{[U(W)\times U(W)]} f_1(g_1)\overline{f_2(g_2)}A_{-1}^{n,r}(\delta(\phi_1\otimes \overline{\phi_2})(g_1,g_2)\chi_V^{-1}(\nu(g_2))dg_1dg_2\\
 =&\int_{[U(W)\times U(W)]}f_1(g_1)
\overline{f_2(g_2)}B_{-1}^{n,r'}(Ik^{n,r}\pi_{K_{H_r}}\delta(\phi_1\otimes\overline{\phi_2} ))(g_1,g_2)\chi_V^{-1}(\nu(g_2)) dg_1dg_2 \\
=& 0 \end{align*}
since $\theta_{n,r'}(-,f)=0$ for any $f\in \pi$. Similarly,
\[\int_{[U(W)\times U(W)]} f_1(g_1)\overline{f_2(g_2)} B_0^{n,r'}(Ik^{n,r}\pi_{K_{H_r}}\delta(\phi_1\otimes\overline{\phi_2}))(g_1,g_2)\chi_V^{-1}(\nu(g_2)) dg_1dg_2=0. \]
The second term identity in the second term range implies that
\[\langle \theta_{n,r}(\phi_1,f_1),\theta_{n,r}(\phi_2,f_2) \rangle=c\cdot \int_{[U(W)\times U(W)]} f_1(g_1)\overline{f_2(g_2)} A_0^{n,r}(\delta(\phi_1\otimes\overline{\phi_2} ))(g_1,g_2)\chi_V^{-1}(\nu(g_2)) dg_1dg_2. \]
Let $f^{(s)}$ be the holomorphic section of $I_n^n(s,\chi_V)=Ind_{P(Y_n)}^{G_{2n}}\chi_V\alpha_E^s$. Set
\[Z(s,f^{(s)};f_1,f_2)=\int_{[U(W)\times U(W)]} E^{(n,n)}(f^{(s)})(g_1,g_2)\cdot \overline{f_1(g_1)}f_2(g_2)\chi_V^{-1}(\nu(g_2))dg_1dg_2. \]
Thus
\[\langle\theta_{n,r}(\phi_1,f_1),\theta_{n,r}(\phi_2,f_2) \rangle=c\cdot Val_{s=(m-n)d/2}Z(s,\Phi^{n,r}(\delta(\phi_1\otimes\overline{\phi_2}));f_1,f_2) \]
where $\Phi^{n,r}(\delta(\phi_1\otimes\overline{\phi_2}))$ is the Siegel-Weil section associated with $\delta(\phi_1\otimes\overline{\phi_2})$.

For $Re(s)$ sufficiently large, if $f^{(s)}=\otimes_vf_v^{(s)} $ and $f_i=f_{i,v}$ are pure tensors, one has an Euler product
\[Z(s,f^{(s)};f_1,f_2)=\prod_{v}Z_v(s,f_v^{(s)};f_{1,v},f_{2,v}) \]
where $$Z_v(s,f_v^{(s)};f_{1,v},f_{2,v})=\int_{U(W)(F_v)}f_v^{(s)}(g_v,1)\cdot\overline{\langle \pi_v(g_v)f_1,f_2\rangle}dg_v. $$
It gives us the standard $L$-function $L(s+1/2,\pi_v\times\chi_{V,v})$.
If every data involved is unramified (which is the case for almost all $v$), then one has
\[Z_v(s,f_v^{(s)};f_{1,v},f_{2,v})=L(s+1/2,\pi_v\times\chi_{V,v})/b_v(s,\chi_{V}). \]
Note that when $s>0$, $b_v(s,\chi_{V})$ has no poles and the Euler product $b(s,\chi_V)$ is absolutely convergent. In general, we would like to define the normalized local zeta integral
\[Z^\ast_v(s,f_v^{(s)};f_{1,v},f_{2,v})=\frac{Z_v(s,f_v^{(s)};f_{1,v},f_{2,v})}{L(s+1/2,\pi_v\times\chi_{V,v})}. \]
By the hypothesis that $E_v$ splits at all archimedean places $v|\infty$, $Z^\ast_v(s,f_v^{(s)};f_{1,v},f_{2,v})$ at $s=s_0=(m-n)d/2$ is nonzero if and only if the local theta lift
$\Theta_{n,r}(\pi_v)\neq0$. Then Theorem \ref{fininaltheoremRallis} holds due to  the following equality
\[\langle \theta_{n,r}(\phi_1,f_1),\theta_{n,r}(\phi_2,f_2)\rangle=c\cdot Val_{s=s_0}L(s+1/2,\pi\times \chi_V)\cdot Z^\ast(s,\Phi^{n,r}(\delta(\phi_1\otimes\overline{\phi_2}));f_1,f_2)  \] 
where $Z^\ast(s,f^{(s)};f_1,f_2)=\prod_v Z^\ast_v(s,f_v^{(s)};f_{1,v},f_{2,v})$ is absolutely convergent.
\end{proof}
\begin{rem}
	If \cite[Conjecture 11.4]{gan2014siegelweil} holds, then we can remove the assumption that $E_v=F_v\oplus F_v$ for all archimedean places $v|\infty$.
\end{rem}

\subsection*{Acknowledgments} The author would like to thank Yuanqing Cai for helpful discussions.
\bibliography{seigel}
\bibliographystyle{plain}

\end{document}